\documentclass[12pt]{article}
\usepackage{array}
\usepackage{amsthm,amsmath,amssymb,color}
\usepackage{fullpage}
\usepackage{graphicx}
\usepackage{float}
\usepackage{tikz}
\usetikzlibrary{shapes.geometric,calc}

\newtheorem{thm}{Theorem}[section]
\newtheorem{lem}[thm]{Lemma}
\newtheorem{prop}[thm]{Proposition}

\newtheorem{cor}[thm]{Corollary}
\newtheorem{conj}[thm]{Conjecture}

\newtheorem{defn}[thm]{Definition}
\theoremstyle{remark}
\newtheorem{rem}[thm]{Remark}

\theoremstyle{plain}

\newcommand{\Lup}{L_1^{\mathrm{up}}}
\newcommand{\Ldown}{L_1^{\mathrm{down}}}
\newcommand{\Gbar}{\overline{G}}
\newcommand{\Zone}{Z_1}
\newcommand{\crc}{\operatorname{circ}}
\DeclareMathOperator{\im}{im}
\DeclareMathOperator{\Spec}{Spec}
\DeclareMathOperator{\rank}{rank}

\begin{document}

\title{Extremal eigenvalues of combinatorial Hodge Laplacians}
\author{
Zhen Chen\thanks{School of Mathematical Sciences, Xiamen University, China. Email: chenzzhen@126.com.}\,~
Suil O\thanks{Department of Applied Mathematics and Statistics, The State University of New York, Korea, Incheon, 21985. Email: suil.o@sunykorea.ac.kr. (Corresponding author) Research supported by the National Research Foundation of Korea (NRF) grant funded by the Korea government(MSIT) No. RS-2025-23523950.}\, ~and
Jianfeng Wang\thanks{School of Mathematics and Statistics, Shandong University of Technology, Zibo 255049, China. Email: jfwang@aliyun.com.}
}

\date{}
\maketitle

\begin{abstract}
\noindent
For a finite simplicial complex on $[n]$, the combinatorial Hodge Laplacian splits as
$L_k=L_k^{\mathrm{up}}+L_k^{\mathrm{down}}$, and Duval and Reiner showed that
$\lambda_{\max}(L_k^{\mathrm{up}})\le n$ in every dimension. We conjecture that
$\lambda_{\max}(L_k^{\mathrm{up}})$ is in fact non-increasing in $k$, equivalently that
$\sigma_{\max}(\partial_{k+1})\le\sigma_{\max}(\partial_k)$, and prove this unconditionally in two
cases: when every missing $(k+1)$-face has at most $k+1$ missing facets, and for shifted complexes,
where we also identify the extremal eigenvalue exactly, as the number of vertices lying in a
$(k+1)$-face. In general we prove
\[
\lambda_{\max}\big(L_k^{\mathrm{up}}\big)\ \le\ \nu_{k-1}+\tfrac1{k+2}\big(n-\nu_{k-1}\big),
\qquad \nu_{k-1}=\lambda_{\max}\big(L_{k-1}^{\mathrm{up}}\big),
\]
refining that ceiling. The proofs run through a localization on the cycle space $\ker\partial_k$,
which turns the comparison into a statement about the complement. In dimension one the complex is
the clique complex of a graph, $L_1$ is its Helmholtzian, the conjecture is a question of Lu, Shi,
Stani\'c, Wang and Wang, and the first case reads $\alpha(G)\le2$. We also characterize the
connected graphs of order at least seven with $\lambda_2(L_1)\le 3$ as the firefly graphs.

\medskip
\noindent
\textbf{Keywords:} Hodge Laplacian; up Laplacian; simplicial complex; shifted complex; Helmholtzian; clique complex; algebraic connectivity; second largest eigenvalue; firefly graph. \\

\noindent
\textbf{AMS subject classification 2020:} 05C50, 05E45, 55U10.
\end{abstract}

\section{Introduction}

The combinatorial Hodge Laplacian, introduced by Eckmann~\cite{E}, extends the graph Laplacian from functions on the vertices of a graph to chains on the faces of a simplicial complex. Let $G$ be a finite abstract simplicial complex on $[n]$: a family of subsets of $[n]$ closed under inclusion, a member of size $j+1$ being a \emph{$j$-face}. Writing $\partial_j$ for the boundary map from $j$-chains to $(j-1)$-chains, the Hodge Laplacian in dimension $k$ is
\[
L_k=L_k^{\mathrm{up}}+L_k^{\mathrm{down}},\qquad
L_k^{\mathrm{up}}=\partial_{k+1}\partial_{k+1}^{\top},\qquad
L_k^{\mathrm{down}}=\partial_k^{\top}\partial_k .
\]
The two summands record opposite directions: $L_k^{\mathrm{down}}$ sees only the faces below dimension $k$, while $L_k^{\mathrm{up}}$ is the contribution of the $(k+1)$-faces. Duval and Reiner~\cite{DR} proved the integrality ceiling
\begin{equation}\label{eq:DRceiling}
\lambda_{\max}\big(L_k^{\mathrm{up}}(G)\big)\ \le\ n\qquad\text{for every }k ,
\end{equation}
and it is a feature of \eqref{eq:DRceiling} that the same bound $n$ serves in every dimension. This paper asks what happens \emph{across} dimensions, and how the extremal eigenvalue can be pinned down more sharply than by~\eqref{eq:DRceiling}.

Throughout we abbreviate
\[
\nu_k(G)=\lambda_{\max}\big(L_k^{\mathrm{up}}(G)\big),
\]
and we call a $(k+1)$-subset of $[n]$ that is not a face of $G$ a \emph{missing $k$-face}. Our organizing question is the following comparison.

\begin{conj}\label{conj:general}
For every finite simplicial complex $G$ and every $k\ge 1$,
\[
\nu_k(G)\ \le\ \nu_{k-1}(G);
\]
equivalently, $\sigma_{\max}(\partial_{k+1})\le\sigma_{\max}(\partial_{k})$, the largest singular value of the boundary operator not increasing with the dimension.
\end{conj}

The equivalence is immediate from $\nu_k=\sigma_{\max}(\partial_{k+1})^2$. Conjecture~\ref{conj:general} is a statement about the combinatorial Hodge Laplacian; the corresponding statement fails for the \emph{normalized} one,\footnote{Normalize by $\mathcal{L}_k^{\mathrm{up}}=D_k^{-1/2}\partial_{k+1}\partial_{k+1}^{\top}D_k^{-1/2}$, where $D_k$ records, for each $k$-face $\sigma$, the number of $(k+1)$-faces containing it; at $k=0$ this is the usual normalized graph Laplacian $D^{-1/2}L(G)D^{-1/2}$. Applying Cauchy--Schwarz to the $k+2$ facets of each $(k+1)$-face gives $\lambda_{\max}(\mathcal{L}_k^{\mathrm{up}})\le k+2$, a ceiling that \emph{grows} with the dimension, in contrast with~\eqref{eq:DRceiling}. On the full $(k+1)$-simplex $\Delta^{k+1}$, that is, on the complex consisting of \emph{all} subsets of a $(k+2)$-set, one has $\lambda_{\max}(\mathcal{L}_k^{\mathrm{up}})=k+2$ while $\lambda_{\max}(\mathcal{L}_{k-1}^{\mathrm{up}})=\tfrac{k+2}{2}$, so the comparison fails by a factor of two in every dimension; for $k=1$ this is already the filled triangle, the clique complex of $K_3$.} so the phenomenon is one of the unnormalized operator.

We settle Conjecture~\ref{conj:general} unconditionally in two cases, and prove a general bound in every dimension.

The first case is a local condition on the missing faces. A missing $(k+1)$-face has $k+2$ facets, and the extreme situation for our method is that all of them are themselves missing; excluding it suffices (Theorem~\ref{thm:tf-k}).

\begin{thm}\label{thm:tf-k-intro}
Let $k\ge1$ and let $G$ be a simplicial complex on $[n]$ in which every missing $(k+1)$-face has at most $k+1$ missing facets. Then $\nu_k(G)\le\nu_{k-1}(G)$.
\end{thm}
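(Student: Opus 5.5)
The plan is to compare Rayleigh quotients through the \emph{localization on the cycle space} announced in the abstract. Two standard facts come first. The nonzero spectrum of $L_k^{\mathrm{up}}=\partial_{k+1}\partial_{k+1}^{\top}$ is supported on $\operatorname{im}\partial_{k+1}\subseteq\ker\partial_k$. So $\nu_k$ is the maximum of $x^{\top}L_k^{\mathrm{up}}x/\|x\|^2$ over nonzero $k$-chains $x$ of $G$ with $\partial_k x=0$. Second, for the full simplex on $[n]$ one has $\partial_{k+1}\partial_{k+1}^{\top}+\partial_k^{\top}\partial_k=nI$ on $k$-chains.

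Write $\partial\tau$ for the boundary vector of a $(k+1)$-subset $\tau$, restricted to the $k$-faces of $G$. For a cycle $x$ of $G$, extended by zero to the full simplex, the two facts give
\[
x^{\top}L_k^{\mathrm{up}}(G)\,x\;=\;n\|x\|^2-\sum_{\tau\ \text{missing }(k+1)\text{-face}}\langle x,\partial\tau\rangle^2 .
\]
The same computation one dimension down gives the analogous expression for $y^{\top}L_{k-1}^{\mathrm{up}}(G)\,y$ with $y\in\ker\partial_{k-1}$. It involves $\|\partial_k y\|^2$ if $y$ is not a cycle, and a sum over missing $k$-faces.

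Fix a top eigenvector $x\in\ker\partial_k$ of $L_k^{\mathrm{up}}$. For each vertex $v$, let $y_v=\iota_v x$ be the contraction of $x$ at $v$: it keeps the faces of $x$ containing $v$, deletes $v$, and applies the orientation sign. Then $\sum_v\|y_v\|^2=(k+1)\|x\|^2$. The cone identity $\partial\iota_v+\iota_v\partial=\mathrm{id}-(\text{cone terms})$ controls $\partial_{k-1}y_v$ through $\partial_k x=0$.

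It therefore suffices to prove the averaged inequality
\[
\sum_v y_v^{\top}L_{k-1}^{\mathrm{up}}(G)\,y_v\;\ge\;\nu_k\sum_v\|y_v\|^2 .
\]
This forces $\nu_{k-1}\ge\nu_k$ by averaging the Rayleigh quotients. Expanding the left side with the formula above gives $n(k+1)\|x\|^2$ minus two contributions. The first is a sum over pairs $(v,\sigma)$ with $\sigma$ a missing $k$-face, of terms $\langle y_v,\partial\sigma\rangle^2$. Each such term equals the coefficient of $x$ on a $(k+1)$-set $\tau=\sigma\cup\{v\}$, essentially a piece of $\langle x,\partial\tau\rangle$. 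The second is a correction from $\partial_{k-1}y_v$. The target is to show that this total deficit is at most $(k+1)\sum_\tau\langle x,\partial\tau\rangle^2$, the deficit for $x$ at level $k$ multiplied by $k+1$.

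The main obstacle is this deficit comparison, face by face over missing $(k+1)$-faces $\tau$, and this is where the hypothesis enters. If every facet of $\tau$ is missing in $G$, then $\langle x,\partial\tau\rangle=0$ while the contractions may still pay a positive price, so the comparison genuinely breaks. If at least one facet of $\tau$ is a face of $G$, the coefficients of $x$ on the facets of $\tau$ that are present are the only ones that contribute. I would first try Cauchy--Schwarz over the at most $k+1$ vertices $v\in\tau$ whose removal leaves a missing facet, which is exactly the count allowed by the hypothesis. The aim is to bound the contraction deficit attached to $\tau$ by $(k+1)\langle x,\partial\tau\rangle^2$, possibly after adding the cross terms from $\partial_{k-1}y_v$, which I expect to telescope using $\partial_k x=0$. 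If that bookkeeping closes, summing over $\tau$ gives the averaged inequality and hence $\nu_k\le\nu_{k-1}$.
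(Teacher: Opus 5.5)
Your framework is the paper's argument read at a top eigenvector. Your contractions $y_v=\iota_v x$ are the paper's $h^{w}$. After rearranging with $L^{\mathrm{up}}+\bar L=nI$ on cycles, your averaged inequality $\sum_v y_v^{\top}L_{k-1}^{\mathrm{up}}(G)\,y_v\ge\nu_k\sum_v\|y_v\|^2$ is Step~2 of the proof of Theorem~\ref{thm:maink} combined with the overcounting $\bar L_k(G)\succeq\frac1{k+1}Q$. The reduction to that inequality, the Rayleigh formula, and $\sum_v\|y_v\|^2=(k+1)\|x\|^2$ are all fine.

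The gap is the decisive step, the deficit comparison. You leave it as ``if that bookkeeping closes'', and the mechanism you propose points the wrong way. You treat each term $\langle y_v,\partial_k\sigma\rangle$ as a \emph{piece} of $\langle x,\partial_{k+1}\tau\rangle$ and plan to apply Cauchy--Schwarz over the vertices of $\tau$. But Cauchy--Schwarz bounds the square of a sum by $(k+1)$ times the sum of the squares. You need the reverse: the sum of the squared terms at most $(k+1)\langle x,\partial_{k+1}\tau\rangle^2$. For genuine partial sums that bound is false; two pieces $1$ and $-1$ sum to $0$.

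What is missing is the exact evaluation behind Step~2 of the paper. For a $k$-face $\sigma$ and a vertex $v\notin\sigma$, with the paper's sign convention for $h^{w}$,
$\langle\partial_k\sigma,\iota_v x\rangle=x_\sigma-\varepsilon_v\langle x,\partial_{k+1}(\sigma\cup v)\rangle$,
where $\varepsilon_v=\pm1$ is the sign of $\sigma$ in $\partial_{k+1}(\sigma\cup v)$. This is the paper's $-\varepsilon_vD_{\sigma,v}(x)$, i.e.\ the cone identity transposed. For $v\in\sigma$ the pairing is $\pm x_\sigma$.

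For the contraction itself the correct relation is $\partial_{k-1}\iota_v=-\iota_v\partial_k$, not the cone-operator identity you quote. Hence $y_v\in Z_{k-1}$, your ``correction from $\partial_{k-1}y_v$'' is identically zero, and there is nothing to telescope.

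Because your $x$ vanishes off $G$, $x_\sigma=0$ for every missing $\sigma$. So each term equals $\langle x,\partial_{k+1}\tau\rangle^2$ exactly, with $\tau=\sigma\cup v$. The total deficit is then $\sum_\tau\omega(\tau)\langle x,\partial_{k+1}\tau\rangle^2$, with $\omega(\tau)$ the number of missing facets of $\tau$. The hypothesis $\omega(\tau)\le k+1$ gives your target in one line, with no Cauchy--Schwarz needed.

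Your diagnosis of the excluded case is also mistaken. If every facet of $\tau$ is missing, each contraction term attached to $\tau$ is a signed sum of coefficients of $x$ on facets of $\tau$, hence $0$, exactly like $\langle x,\partial_{k+1}\tau\rangle$. Nothing breaks there. Such faces matter in the paper only because it bounds $x^{\top}\bar L_k(G)x$ over all of $Z_k$, where $x$ need not vanish off $G$. At a top eigenvector of $L_k^{\mathrm{up}}(G)$, which is supported on $G$, they contribute $0$ to both sides. So once the identity is in place, your route does not even use the hypothesis.
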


The second case is the class of shifted complexes, the higher-dimensional analogue of threshold graphs. Here Conjecture~\ref{conj:general} holds in every dimension, and the extremal eigenvalue is not merely bounded but counted: writing $V_j(G)$ for the number of vertices lying in some $j$-face, we obtain from the Laplacian integrality of Duval and Reiner that $\nu_k(G)=V_{k+1}(G)$, whence (Theorem~\ref{thm:shifted}) the following.

\begin{thm}\label{thm:shifted-intro}
For every shifted complex $G$ and every $k\ge1$,
\[
\nu_k(G)=V_{k+1}(G)\ \le\ V_k(G)=\nu_{k-1}(G).
\]
\end{thm}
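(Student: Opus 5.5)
The plan is to read off $\nu_k$ directly from the Duval--Reiner description of the Laplacian spectra of shifted complexes, and then to compare the resulting counts of vertices across dimensions.

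First, I will quote the Duval--Reiner theorem for a shifted complex $G$ on $[n]$. Write $G_{(j)}$ for the set of $j$-faces, and for a vertex $r$ let $d^{(j)}_r$ be the number of $j$-faces containing $r$. The theorem says that the spectrum of the up-Laplacian $L^{\mathrm{up}}_{j-1}$ is the conjugate (transpose) $\big(d^{(j)}\big)^{T}$ of the $j$-dimensional degree sequence, up to zeros. Here $\big(d^{(j)}\big)^{T}$ is the partition whose $m$-th part counts the vertices $r$ with $d^{(j)}_r\ge m$. I expect the main obstacle to be pinning down the exact indexing conventions of the theorem. In particular, I must confirm that the degree counts $j$-faces of $G$ itself rather than of a pure skeleton, and whether a face is counted at every vertex or only at a distinguished one such as its minimum. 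If the relevant count turns out to be the minimum-vertex count, the same conclusion should still hold for shifted complexes, since then $d^{(j)}_r>0$ for exactly the vertices lying in some $j$-face, by shifting. I will check this on $\Delta^{k+1}$ and on a small threshold graph ($k=0$, where it reduces to Merris' theorem that the Laplacian spectrum of a threshold graph is the conjugate of its degree sequence).

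Granting the theorem with $j=k+1$, the largest eigenvalue $\nu_k$ is the first part of the conjugate partition. That part equals the number of vertices $r$ with $d^{(k+1)}_r\ge1$, i.e. the number of vertices lying in some $(k+1)$-face. Hence $\nu_k(G)=V_{k+1}(G)$. Applying the same statement with $k-1$ in place of $k$ gives $\nu_{k-1}(G)=V_k(G)$. The case $k-1=0$ is the graph Laplacian of the $1$-skeleton, which is a threshold graph, and there Merris' theorem gives the same identity.

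The inequality $V_{k+1}(G)\le V_k(G)$ is then immediate from closure under inclusion. If $v$ lies in a $(k+1)$-face $F$, then removing from $F$ any vertex other than $v$ leaves a $k$-face containing $v$. Combining the two identities with this inequality yields the statement. As an independent sanity check on the identity $\nu_k=V_{k+1}$, I would verify the lower bound $\nu_k\ge V_{k+1}$ directly. Let $F_0=\{1,\dots,k+1\}$, which is a face whenever any $k$-face exists, by shifting. Take the test chain $e_{F_0}$ together with the chains on the faces $F_0\setminus\{i\}\cup\{v\}$, which are faces by shifting. These should span an invariant block whose eigenvalue equals the number of vertices $v$ with $F_0\cup\{v\}\in G$. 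For shifted complexes this number equals $V_{k+1}$, since every vertex lying in some $(k+1)$-face can be shifted down into a face containing $F_0$.
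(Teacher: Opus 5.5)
Your argument is correct and is essentially the paper's proof. Both read $\nu_k=V_{k+1}$ and $\nu_{k-1}=V_k$ off the Duval--Reiner theorem as the largest part of the conjugate of the degree sequence, with degrees counting faces at every vertex (your $\Delta^{k+1}$ check, giving eigenvalue $k+2$, confirms this), and both get $V_{k+1}\le V_k$ by deleting from a $(k+1)$-face a vertex other than $v$.

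Your optional side remarks should be dropped or repaired. A minimum-vertex count would not have the claimed support: for a single triangle only vertex $1$ would count. The span of $e_{F_0}$ and the $e_{(F_0\setminus i)\cup v}$ is not $L_k^{\mathrm{up}}$-invariant in general. However, $L_k^{\mathrm{up}}e_{F_0}$ itself is an eigenvector with eigenvalue $(k+1)+\#\{v\notin F_0: F_0\cup\{v\}\in G\}=V_{k+1}$ whenever $G$ has a $(k+1)$-face.
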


In general we prove a bound that confirms Conjecture~\ref{conj:general} up to a controlled fraction of the gap left by~\eqref{eq:DRceiling} (Theorem~\ref{thm:maink}).

\begin{thm}\label{thm:maink-intro}
Let $k\ge1$ and let $G$ be a simplicial complex on $[n]$. Then
\[
\nu_k(G)\ \le\ \nu_{k-1}(G)+\frac{1}{k+2}\big(n-\nu_{k-1}(G)\big).
\]
\end{thm}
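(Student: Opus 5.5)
The plan is to work on the cycle space $\ker\partial_k$, using the localization the abstract describes. Every eigenvector of $L_k^{\mathrm{up}}=\partial_{k+1}\partial_{k+1}^{\top}$ with nonzero eigenvalue lies in $\im\partial_{k+1}\subseteq\ker\partial_k$. It therefore suffices to show, for every $x\in\ker\partial_k$,
\[
\sum_{\tau\in G,\ |\tau|=k+2}\big(\partial_{k+1}^{\top}x\big)_\tau^2\ \le\ \frac{(k+1)\,\nu_{k-1}+n}{k+2}\,\|x\|^2 ,
\]
since the right-hand constant equals $\nu_{k-1}+\frac1{k+2}(n-\nu_{k-1})$. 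This inequality exhibits the bound as a convex combination of $\nu_{k-1}$, with weight $\frac{k+1}{k+2}$, and of the Duval--Reiner ceiling $n$, with weight $\frac1{k+2}$. The proof should realize exactly this split. The normalization comes from the full simplex on $[n]$: there $L_k^{\mathrm{up}}+L_k^{\mathrm{down}}=nI$, so every cycle of the full simplex satisfies $\|\partial_{k+1}^{\top}x\|^2=n\|x\|^2$.

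\textbf{Step 1 (vertex splitting).} Fix a $(k+1)$-face $\tau$ of $G$ and a vertex $v\in\tau$, and put $\sigma=\tau\setminus\{v\}$. The coboundary value splits into the term from the facet avoiding $v$ and the terms from the facets through $v$:
\[
(\partial_{k+1}^{\top}x)_\tau=\pm x_\sigma\pm\big(\partial_k^{\top}y^{v}\big)_\sigma .
\]
Here $y^v$ is the $(k-1)$-chain on the link of $v$ defined by $y^v_\rho=x_{\rho\cup\{v\}}$. Averaging over the $k+2$ choices of $v\in\tau$ gives
\[
(\partial_{k+1}^{\top}x)_\tau^2=\frac{1}{k+2}\sum_{v\in\tau}\big(x_\sigma\pm(\partial_k^{\top}y^v)_\sigma\big)^2 .
\]
This averaging is the source of the factor $\frac1{k+2}$.

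\textbf{Step 2 (sum over faces and bound the pieces).} Next I sum the identity of Step 1 over all $(k+1)$-faces $\tau$ of $G$ and expand each square into three kinds of terms.

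The pure $y^v$ terms only involve pairs $(\rho\cup\{v\},\sigma)$ with $\sigma\cup\{v\}\in G$. Hence $\sigma$ is a $k$-face of $G$ in the link of $v$, and these terms are controlled by $\nu_{k-1}$ applied to $y^v$. The relevant operator is a restriction of $L_{k-1}^{\mathrm{up}}(G)$, so Cauchy interlacing keeps its top eigenvalue at most $\nu_{k-1}$. Each $k$-face containing $v$ contributes once to $y^v$, so $\sum_v\|y^v\|^2=(k+1)\|x\|^2$. The pure $y^v$ terms therefore total at most $\frac{k+1}{k+2}\nu_{k-1}\|x\|^2$.

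The pure $x_\sigma^2$ terms and the cross terms are the ones to compare with the full simplex. Using the cycle condition $\partial_k x=0$, I will rewrite $(\partial_k y^v)$ in terms of the restriction of $x$ to faces avoiding $v$. The aim is to show that, after summing over $v\in[n]$, these contributions total at most $\frac{n}{k+2}\|x\|^2$. Equivalently, they should be dominated by the corresponding sum over all $(k+2)$-subsets of $[n]$, where the exact identity $\|\partial^{\top}x\|^2=n\|x\|^2$ fixes the constant.

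\textbf{Main obstacle.} The difficulty is the cross terms $2x_\sigma(\partial_k^{\top}y^v)_\sigma$. They need not have a sign when the sum is restricted to the faces of $G$. My first attempt would be to pair them with the complementary terms from the missing $(k+1)$-faces and exploit the cycle condition. The intended conclusion is that the missing faces contribute a nonnegative correction, namely a sum of squares over the complement, and that this correction can be dropped. This is where the ``statement about the complement'' mentioned in the abstract should enter. If the direct expansion does not close, I would instead bound the cross terms by $2ab\le t a^2+t^{-1}b^2$ with $t$ chosen to balance the two ceilings. I would then check whether the constant $\frac{(k+1)\nu_{k-1}+n}{k+2}$ is still attained, using the full simplex, where equality holds, as the test case.
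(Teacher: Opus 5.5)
There is a genuine gap, and it is the step you flag yourself: the cross terms are never controlled, and the split you set up in Step~2 cannot be completed as stated. Your bound on the pure $y^v$ terms is valid. The complementary target, that the pure $x_\sigma^2$ terms plus the cross terms total at most $\frac{n}{k+2}\|x\|^2$, is false on your own test case.

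Write $(\partial_{k+1}^{\top}x)_{\sigma\cup v}=\varepsilon_v x_\sigma+D_{\sigma,v}$ with $D_{\sigma,v}=\pm(\partial_k^{\top}y^v)_\sigma$, as in Step~1. The cycle condition gives $\sum_{v\notin\sigma}\varepsilon_v D_{\sigma,v}=(k+1)x_\sigma$, with the sum over \emph{all} $v\in[n]\setminus\sigma$. So on the full simplex your two groups add up to
\[
\tfrac{(n-k-1)+2(k+1)}{k+2}\|x\|^2=\tfrac{n+k+1}{k+2}\|x\|^2 .
\]
The excess $\frac{k+1}{k+2}\|x\|^2$ is slack you discarded in the pure $y^v$ bound. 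The form $(y^v)^{\top}L_{k-1}^{\mathrm{up}}(G)\,y^v$ also contains the entries $(\partial_k^{\top}y^v)_\sigma$ with $\sigma\ni v$. Each equals $\pm x_\sigma$, they total $\sum_v\sum_{\sigma\ni v}x_\sigma^2=(k+1)\|x\|^2$, and they must be subtracted.

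The fallback $2ab\le ta^2+t^{-1}b^2$ cannot close the argument either. On the full simplex the cross terms are positive and equal $\frac{2(k+1)}{k+2}\|x\|^2$, while the best such estimate is $\frac{2}{k+2}\sqrt{(n-k-1)(k+1)(n-1)}\,\|x\|^2$. For $n\ge k+3$ this gives a bound strictly above $n$, which is the theorem's right-hand side there.

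A smaller point: $y^v$ must carry the sign of inserting $v$ into $\rho$, i.e.\ $y^v_\rho$ is the value of $x$ on the oriented simplex $v*\rho$. Read literally on increasingly ordered sets, your Step~1 identity already fails for $k=1$ when $v$ lies between the two vertices of $\sigma$.

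Your idea of absorbing the missing $(k+1)$-faces as a nonnegative sum of squares is the right one. It must be applied to whole squares before expanding, because the identity above evaluates the cross terms only when $v$ ranges over all of $[n]\setminus\sigma$. Let $\tau$ run over the $(k+1)$-faces of $G$ and $\sigma$ over its $k$-faces, and recall that $x$ is supported on the $k$-faces of $G$. Enlarging the index set adds only squares over missing faces, so
\[
(k+2)\sum_{\tau}(\partial_{k+1}^{\top}x)_\tau^2\ \le\ \sum_{\sigma}\sum_{v\notin\sigma}\big(\varepsilon_v x_\sigma+D_{\sigma,v}\big)^2=(n+k+1)\|x\|^2+\sum_{\sigma}\sum_{v\notin\sigma}D_{\sigma,v}^2 .
\]
The equality is the divergence-free identity of Lemma~\ref{lem:identityk}. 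The last sum equals $\sum_v\big[(y^v)^{\top}L_{k-1}^{\mathrm{up}}(G)\,y^v-\sum_{\sigma\ni v}x_\sigma^2\big]$, which is at most $(k+1)(\nu_{k-1}-1)\|x\|^2$. The total is therefore exactly $(n+(k+1)\nu_{k-1})\|x\|^2$, which is the theorem.

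Completed this way, your route is the complement-dual of the paper's. The paper proves $\bar L_k(G)\succeq\frac1{k+2}Q$ with $Q$ summed over the missing $k$-faces, uses the same identity, and applies $a_k(\bar G)=n-\nu_{k-1}$ to the signed contractions $h^w$, which are your $y^v$. Your operator $\sum_\sigma\sum_{v\notin\sigma}v_{\sigma\cup v}v_{\sigma\cup v}^{\top}$ and $Q$ add up to $(k+2)nI$ on $Z_k$, so the two estimates are equivalent. Yours uses only $\lambda_{\max}$ on $G$, so it does not even need $y^v\in Z_{k-1}$.
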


Since $\nu_{k-1}\le n$, Theorem~\ref{thm:maink-intro} strengthens~\eqref{eq:DRceiling}, and it meets the conjectured $\nu_{k-1}$ exactly when $\nu_{k-1}=n$. Thus Conjecture~\ref{conj:general} can fail, if at all, only by a bounded fraction of $n-\nu_{k-1}$.

The engine behind Theorems~\ref{thm:tf-k-intro} and~\ref{thm:maink-intro} is a localization. On the cycle space $Z_k=\ker\partial_k$ the up Laplacian of the full simplex is $nI$, so $L_k^{\mathrm{up}}(G)$ becomes $n$ times the identity minus the up Laplacian $\bar L_k(G)$ of the \emph{missing} $(k+1)$-faces, and the comparison turns into a lower bound on $\lambda_{\min}(\bar L_k|_{Z_k})$. A divergence-free identity (Lemma~\ref{lem:identityk}) then evaluates the total circulation of a cycle around the missing faces through a fixed $k$-face, and converts that lower bound into the codimension-one connectivity of the complement. The whole argument is thus a statement about the complement of~$G$.

We now turn to the case $k=1$, which carries a graph-theoretic reading and which we treat in detail. For a graph $G$ on $[n]$ the relevant complex is the \emph{clique complex} $X(G)$, whose $0$-, $1$- and $2$-cells are the vertices, edges and triangles of $G$; the operator $L_1$ acts on edge flows and underlies the Hodge--Helmholtz decomposition of a flow into gradient, harmonic and curl parts. For this reason $L_1$ is called the \emph{Helmholtzian} of $G$, and its edge-indexed matrix representation is the \emph{Helmholtzian matrix} of~\cite{LiLuWang}. Such operators are a basic tool in topological data analysis, statistical ranking and signal processing on simplicial networks, where the clique complex supplies the higher-order structure built from pairwise relations (see Horak and Jost~\cite{HJ} and Lu, Shi, Stani\'c, Wang and Wang~\cite{LSSWW,LSSWWnovel}, together with the references therein).

All graphs in this paper are finite and simple, and have at least one edge, so that $L_1$ is a nonempty matrix; we also assume $n\ge 3$, so that the cycle space $\Zone$ of Section~\ref{sec:def} is nontrivial. We orient each edge $\{i,j\}$ from the smaller to the larger endpoint and each triangle by the increasing order of its vertices, and we write $L(G)=D(G)-A(G)=\partial_1\partial_1^{\top}$ for the ordinary graph Laplacian and $\mu_1(G)\ge\cdots\ge\mu_n(G)=0$ for its eigenvalues. Since $\Ldown=\partial_1^{\top}\partial_1$ and $\partial_1\partial_1^{\top}$ share their nonzero spectrum,
\begin{equation}\label{eq:down-mu}
\lambda_{\max}(\Ldown)=\mu_1(G),
\end{equation}
and in particular $\nu_0(G)=\mu_1(G)$. We order the Helmholtzian eigenvalues as $\lambda_1(L_1)\ge\lambda_2(L_1)\ge\cdots$ and abbreviate $\lambda_i=\lambda_i(L_1)$, $\mu_i=\mu_i(G)$ when $G$ is understood.

By the Hodge decomposition $C_1=\im\partial_2\oplus\ker L_1\oplus\im\partial_1^{\top}$ the matrix $L_1$ is block diagonal, acting as $\Lup$, as $0$ and as $\Ldown$ on the three blocks; hence, for connected $G$, as a multiset,
\begin{equation}\label{eq:spec-decomp}
\Spec(L_1)\ =\ \big\{\mu_1,\dots,\mu_{n-1}\big\}\ \uplus\ \Spec^{+}\!\big(\Lup\big)\ \uplus\ \{0^{(b_1)}\},
\end{equation}
where $\Spec^{+}(\Lup)$ is the nonzero spectrum of the up part and $b_1=m-(n-1)-\rank\partial_2$ is the first Betti number of the clique complex (Proposition~\ref{prop:spec-decomp}). In particular $\lambda_{\max}(L_1)=\max\{\nu_1(G),\mu_1(G)\}$ (Lemma~\ref{lem:hodge}), so Conjecture~\ref{conj:general} in dimension one,
\begin{equation}\label{eq:reduce}
\nu_1(G)\ \le\ \mu_1(G),\qquad\text{equivalently}\qquad \sigma_{\max}(\partial_2)\le\sigma_{\max}(\partial_1),
\end{equation}
says exactly that the largest Helmholtzian eigenvalue of a graph is its largest Laplacian eigenvalue. That every nonzero Laplacian eigenvalue reappears in $\Spec(L_1)$ was observed in~\cite{LSSWW}; whether the triangles can push $\lambda_{\max}(L_1)$ any higher is Problem~5.5 of Lu, Shi, Stani\'c, Wang and Wang~\cite{LSSWW}, who verified it for graphs of small order.

For graphs the hypothesis of Theorem~\ref{thm:tf-k-intro} reads: no $3$-set has all three of its pairs missing from $G$. That is, $\Gbar$ is triangle-free, equivalently $\alpha(G)\le 2$.

\begin{thm}\label{thm:triangle-free}
Let $G$ be a graph on $[n]$ with $\alpha(G)\le 2$, equivalently with triangle-free complement. Then $\lambda_{\max}(L_1)=\mu_1(G)$; that is, \eqref{eq:reduce} holds for $G$.
\end{thm}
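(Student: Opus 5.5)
The plan is to reduce the statement to the $k=1$ case of Theorem~\ref{thm:tf-k-intro} and, in case a self-contained argument is wanted, to run the localization on the cycle space directly.

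\textbf{Reduction.} For the clique complex $X(G)$, a missing $2$-face is a triple $\{a,b,c\}$ that is not a triangle of $G$. Its facets are the three pairs $ab,bc,ac$. Since $X(G)$ is a clique complex, such a triple fails to be a face exactly when at least one of its pairs is a non-edge. The hypothesis of Theorem~\ref{thm:tf-k-intro} with $k=1$ asks that at most two of the three pairs be missing, i.e.\ that no triple is independent in $G$. This is precisely $\alpha(G)\le2$, or equivalently that $\Gbar$ is triangle-free. Hence $\nu_1(G)\le\nu_0(G)=\mu_1(G)$. Combining this with Lemma~\ref{lem:hodge}, $\lambda_{\max}(L_1)=\max\{\nu_1,\mu_1\}=\mu_1$, which is \eqref{eq:reduce}.

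\textbf{Direct proof on the cycle space.} Let $\bar L$ be the up Laplacian of the missing triples, so that on $\Zone$ we have $\Lup=nI-\bar L$.
\begin{enumerate}
\item By Rayleigh quotients, $\nu_1$ is attained on $\im\partial_2\subseteq\Zone$. So it suffices to show $\langle \bar L f,f\rangle\ge (n-\mu_1)\|f\|^2$ for every cycle $f$ supported on edges of $G$.
\item Expanding, $\langle \bar L f,f\rangle=\sum_{T\notin G}(f(\partial T))^2$, where $f(\partial T)$ is the circulation of $f$ around the boundary of the triple $T$.
\item Fix an edge $ij\in G$. Because $f$ is divergence-free, summing the circulations over all $w\ne i,j$ gives
\[
\sum_{w}f(\partial\{i,j,w\})=(n-2)f(ij)+\text{(terms that cancel)},
\]
which should be the content of Lemma~\ref{lem:identityk}. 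The triples with $w\in N(i)\cap N(j)$ are triangles of $G$. Their contribution is controlled by the graph Laplacian through the complement: a vertex $w$ with $w\notin N(i)$ or $w\notin N(j)$ is charged to a non-edge of $G$.
\item The hypothesis that $\Gbar$ is triangle-free means that each missing triple contains at least one edge of $G$. So every $f(\partial T)$ for $T\notin G$ involves some edge of $G$, and the remaining non-edge pairs can be absorbed. Then apply Cauchy--Schwarz over the missing triples through $ij$, and compare the resulting count with $n-\mu_1\le n-(\text{something tied to }d_i+d_j)$, using a bound of the form $\mu_1\ge\max$ over edges of a degree quantity.
\end{enumerate}

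The main obstacle is this final comparison. One must show that the number of missing triples through $ij$, weighted appropriately, is at least $n-\mu_1$, and not merely $n-\max(d_i+d_j)$, since $\mu_1$ can be smaller than $\max(d_i+d_j)$. If the direct route stalls there, I would simply rely on the reduction to Theorem~\ref{thm:tf-k-intro}.
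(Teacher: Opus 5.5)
Your reduction is correct. In the clique complex a triple is a missing $2$-face exactly when it contains a non-edge, and its missing facets are its non-edges. So the hypothesis of Theorem~\ref{thm:tf-k} at $k=1$ is precisely $\alpha(G)\le 2$, and Lemma~\ref{lem:hodge} turns $\nu_1\le\nu_0=\mu_1$ into $\lambda_{\max}(L_1)=\mu_1$. It is not circular either, since Theorem~\ref{thm:tf-k} is proved through Theorem~\ref{thm:maink} without using this statement. But it carries no content. Theorem~\ref{thm:tf-k} at $k=1$ \emph{is} this theorem, as the paper says, and its proof specializes to the same argument the paper gives directly here. All the substance is the inequality $\lambda_{\min}(\bar L|_{\Zone})\ge a(\Gbar)$, so the question is whether your direct route reaches it. It does not; it has a genuine gap.

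First, your step~3 identity is wrong, and of the wrong kind. Divergence-freeness at $j$ and at $i$ gives $\sum_{w\ne i,j}f_{jw}=f_{ij}$ and $\sum_{w\ne i,j}f_{wi}=f_{ij}$, so $\sum_{w\ne i,j}\crc_{ijw}(f)=n\,f_{ij}$ and nothing cancels. In any case a linear identity cannot control $\sum_T\crc_T(f)^2$; you need the quadratic identity of Lemma~\ref{lem:identity}, $\sum_{w\ne p,q}\crc_{pqw}(x)^2=(n+2)x_{pq}^2+\sum_{w\ne p,q}(x_{qw}-x_{pw})^2$.

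Second, you anchor the sums at edges $ij$ of $G$, so the triples through $ij$ mix triangles and non-triangles, and the triangles would have to be discarded somehow. The paper anchors them at \emph{non-edges} $pq$, every triple through which is automatically missing. The sum is then $Q=\sum_S\omega(S)v_Sv_S^{\top}$, and the hypothesis enters only as $\omega(S)\le 2$, i.e.\ $\bar L\succeq\frac12Q$.

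Third, and decisively, no Cauchy--Schwarz count of missing triples against degrees can produce $n-\mu_1$, because $\mu_1\le\max_{ij\in E(G)}(d_i+d_j)$ points the wrong way, as you observe yourself. The missing idea is to use the spectrum of $\Gbar$ directly, one vertex at a time. For each $w$, the contraction $g^w_u=x_{uw}$ ($u\ne w$), $g^w_w=0$, is orthogonal to $\mathbf 1$ by divergence-freeness at $w$, so $(g^w)^{\top}L(\Gbar)g^w\ge a(\Gbar)\|g^w\|^2$. Summed over $w$, the left sides total $\sum_{pq\in E(\Gbar)}\sum_{w\ne p,q}(x_{qw}-x_{pw})^2+2\sum_{e\in E(\Gbar)}x_e^2$ and the right sides total $2a(\Gbar)\|x\|^2$. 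Substituting into the quadratic identity gives $x^{\top}Qx\ge 2a(\Gbar)\|x\|^2$ (Lemma~\ref{lem:Qbound}), and $\bar L\succeq\frac12 Q$ then yields $\lambda_{\min}(\bar L|_{\Zone})\ge a(\Gbar)=n-\mu_1$.
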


This class is infinite and contains non-joins with $\mu_1(G)<n$, namely the complements of connected triangle-free graphs, and for $n\ge4$ the bound is attained at $G=K_{n-1}\cup K_1$, whose complement is the star $K_{1,n-1}$. Specializing Theorem~\ref{thm:maink-intro} to $k=1$ gives the unconditional bound below, in which the localization above becomes a statement about the algebraic connectivity $a(\Gbar)=n-\mu_1(G)$ of the complement.

\begin{thm}\label{thm:main}
For every $n$-vertex graph $G$, we have
\[
\lambda_{\max}\!\big(\Lup(G)\big)\ \le\ \mu_1(G)+\frac13\big(n-\mu_1(G)\big).
\]
Equivalently, $\lambda_{\min}\big(\bar L(G)|_{\Zone}\big)\ge\tfrac23\,a(\Gbar)$, where $\bar L(G)$ is the up Laplacian of the missing triangles of $G$; inequality~\eqref{eq:reduce} is the same statement with $\tfrac23$ replaced by $1$.
\end{thm}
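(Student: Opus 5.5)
The plan is to localize on the cycle space and bound the up Laplacian of the missing triangles by Laplacian quadratic forms of the complement $\Gbar$.

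\textbf{Step 1: localization.} Regard every edge flow on $G$ as a flow on $K_n$ that vanishes on the non-edges. A top eigenvector $x$ of $\Lup(G)=\partial_2\partial_2^{\top}$ lies in $\im\partial_2\subseteq\ker\partial_1$, so it is a cycle of $K_n$ supported on $E(G)$. For the full simplex on $[n]$ one has $L_1=nI$, and $L_1^{\mathrm{down}}$ vanishes on cycles, so $L_1^{\mathrm{up}}(K_n)=nI$ on $\Zone$.

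Since $x$ is supported on $E(G)$, for every triangle $T$ of $K_n$ the circulation $c(T)=(\partial_2^{\top}x)(T)$ is the same whether computed in $G$ or in $K_n$. Splitting the triangles of $K_n$ into those of $X(G)$ and the missing ones gives
\[
\langle x,\Lup(G)x\rangle = n\|x\|^2-\sum_{T\ \mathrm{missing}} c(T)^2 = n\|x\|^2-\langle x,\bar L(G)x\rangle .
\]
Moreover $n-\mu_1(G)=a(\Gbar)$, since $L(G)+L(\Gbar)=nI-J$. The theorem is therefore equivalent to
\[
\sum_{T\ \mathrm{missing}} c(T)^2\ \ge\ \tfrac23\,a(\Gbar)\,\|x\|^2
\]
for every cycle $x$ supported on $E(G)$.

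\textbf{Step 2: circulations across a missing edge are gradients.} For each vertex $j$ define the vertex function $f_j(v)=x_{jv}$, using the antisymmetric convention $x_{vj}=-x_{jv}$ and $f_j(j)=0$. The divergence-free condition says exactly that
\[
\sum_v f_j(v)=\pm(\partial_1x)_j=0,
\]
so each $f_j$ is orthogonal to the constant vector. Now take a missing triangle $T=\{i,j,w\}$ and a missing edge $iw\in E(\Gbar)$ of $T$. Since $x_{wi}=0$,
\[
c(T)=x_{ij}+x_{jw}=f_j(w)-f_j(i),
\]
which is the gradient of $f_j$ across the edge $iw$ of $\Gbar$.

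\textbf{Step 3: counting and the spectral gap.} Sum $(f_j(w)-f_j(i))^2$ over all $j$ and all $iw\in E(\Gbar)$. The terms with $j\in\{i,w\}$ vanish, since $f_i(w)=x_{iw}=0$ and $f_i(i)=0$. Every other term is $c(T)^2$ for the missing triangle $T=\{i,j,w\}$. A missing triangle appears once for each of its missing edges, hence at most $3$ times. Therefore
\[
\sum_{T\ \mathrm{missing}}c(T)^2\ \ge\ \tfrac13\sum_j f_j^{\top}L(\Gbar)f_j\ \ge\ \tfrac13\,a(\Gbar)\sum_j\|f_j\|^2 .
\]
The second inequality uses $f_j\perp\mathbf 1$, so $f_j^{\top}L(\Gbar)f_j\ge a(\Gbar)\|f_j\|^2$; this holds trivially if $a(\Gbar)=0$. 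Each edge of $G$ contributes to exactly two of the $f_j$, so $\sum_j\|f_j\|^2=2\|x\|^2$, and the right-hand side becomes $\tfrac23\,a(\Gbar)\|x\|^2$, as required.

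The main point needing care is the multiplicity count in Step 3, because it is the only place a constant is lost. As a consistency check, if $\Gbar$ is triangle-free then every missing triangle has at most two missing edges, the factor $\tfrac13$ becomes $\tfrac12$, and the same argument yields constant $1$, recovering Theorem~\ref{thm:triangle-free}. I would verify the orientation signs in $c(T)=f_j(w)-f_j(i)$ for each relative order of $i,j,w$. Only squares enter, so a sign discrepancy there would be harmless.
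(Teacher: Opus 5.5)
Your proof is correct and follows the paper's argument: the localization $\Lup(G)|_{\Zone}+\bar L(G)|_{\Zone}=nI$, your $f_j$ (the paper's $g^w$ up to sign) orthogonal to $\mathbf 1$ by divergence-freeness, the Fiedler bound $f^{\top}L(\Gbar)f\ge a(\Gbar)\|f\|^2$, the count $\sum_j\|f_j\|^2=2\|x\|^2$, and the overcount $\omega(S)\le 3$. The one difference is that you test only a top eigenvector, which lies in $\im\partial_2\subseteq\mathbb{R}^{E(G)}$ and therefore vanishes on $E(\Gbar)$; this makes each circulation through a missing edge an exact gradient, so Lemma~\ref{lem:identity} and the boundary correction in Lemma~\ref{lem:Qbound} are not needed (if $G$ has no triangles the bound is trivial), whereas the paper bounds $x^{\top}Qx$ on all of $\Zone$, where those two extra contributions combine to $n\sum_{e\in E(\Gbar)}x_e^2\ge 0$ and are then discarded.
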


So far the splitting~\eqref{eq:spec-decomp} has entered only through its largest element; it also settles the bottom of the range of the second largest Helmholtzian eigenvalue. Since $\mu_1$ and $\mu_2$ both lie in $\Spec(L_1)$, we have $\lambda_2(L_1)\ge\mu_2(G)$ for every connected graph on at least three vertices. Li, Guo and Shiu~\cite{LGS} proved that, among connected graphs of order $n\ge 7$, those with $\mu_2(G)\le 3$ are exactly the \emph{firefly graphs} $F_{s,t,n-2s-2t-1}$ (Definition~\ref{def:firefly}). We prove the Helmholtzian analogue.

\begin{thm}\label{thm:lambda2}
Let $G$ be a connected graph of order $n\ge 7$. Then $\lambda_2(L_1)\le 3$ if and only if $G$ is a firefly graph. More precisely,
\[
\lambda_2(L_1)=
\begin{cases}
1, & G\cong F_{0,0,n-1},\\[2pt]
\text{a value in }\big(\tfrac{3+\sqrt5}{2}-\tfrac1n,\ \tfrac{3+\sqrt5}{2}\big), & G\cong F_{0,1,n-3},\\[2pt]
\tfrac{3+\sqrt5}{2}, & G\cong F_{0,t,n-2t-1},\ t\ge 2,\\[2pt]
3, & G\cong F_{s,t,n-2s-2t-1},\ s\ge 1.
\end{cases}
\]
\end{thm}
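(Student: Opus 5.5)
The plan is to exploit the spectral splitting \eqref{eq:spec-decomp}: $\Spec(L_1)$ is the multiset union of $\{\mu_1,\dots,\mu_{n-1}\}$, the nonzero spectrum of $\Lup$, and $b_1$ zeros. Hence $\lambda_2(L_1)$ is the second largest element of $\{\mu_1,\dots,\mu_{n-1}\}\uplus\Spec^{+}(\Lup)$. In particular $\lambda_2(L_1)\ge\mu_2(G)$, and $\lambda_2(L_1)\le 3$ forces $\mu_2(G)\le 3$. By the theorem of Li, Guo and Shiu~\cite{LGS}, for $n\ge7$ this already forces $G$ to be a firefly graph $F_{s,t,n-2s-2t-1}$. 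The ``only if'' direction is therefore immediate. The remaining work is to compute $\lambda_2(L_1)$ exactly for every firefly graph, which also yields the ``if'' direction.

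For the computation I would first describe the clique complex of $F_{s,t,r}$. It has a center $c$, $s$ triangles through $c$ (the fireflies), $t$ pendant paths of length two, and $r$ pendant edges. Its only triangles are the $s$ fireflies, and they are pairwise edge-disjoint. Consequently $\Lup=\partial_2\partial_2^{\top}$ is block diagonal with $s$ rank-one blocks $J$-type, and its nonzero spectrum is $\{3^{(s)}\}$. Also $b_1=0$. The Laplacian spectrum of a firefly graph can be computed by standard equitable-partition and pendant-vertex arguments, as in~\cite{LGS}:
\begin{itemize}
\item $\mu_1$ is the largest root associated with the star-like center, roughly $n-2s-t$ plus corrections, and it exceeds $3$ except in degenerate cases.
\item Each firefly contributes an eigenvalue $3$, with multiplicity $s-1$ from antisymmetric combinations across triangles, plus further ones from within a triangle.
\item Each pendant path contributes the eigenvalues $\frac{3\pm\sqrt5}{2}$, with multiplicity $t-1$.
\item The pendant edges contribute $1$ with multiplicity $r-1$.
\end{itemize}

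Next comes the case analysis. For $s\ge1$ the up part already contributes $3$, and $\mu_1>3$ (since $n\ge7$), so $\lambda_2\ge3$. The bound $\mu_2\le3$ from~\cite{LGS} then gives $\lambda_2=3$. For $s=0$ the up part is zero, so $\lambda_2=\mu_2$, and the claimed values follow:
\begin{itemize}
\item For $t=0$ the graph is the star, with $\mu_2=1$.
\item For $t\ge2$ there are two pendant paths, which give $\mu_2=\frac{3+\sqrt5}{2}$ via an antisymmetric eigenvector.
\item For $t=1$ the graph is a star with one subdivided edge, and $\mu_2$ is the middle root of a cubic.
\end{itemize}

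The main obstacle is the $t=1$ case: locating that root in $\big(\tfrac{3+\sqrt5}{2}-\tfrac1n,\tfrac{3+\sqrt5}{2}\big)$. I would use the quotient matrix of the equitable partition into the center, the path middle, the path end, and the leaves to get the characteristic polynomial $p(x)$, which is a cubic times $x(x-1)^{n-4}$. I would then evaluate $p$ at the two endpoints and check a sign change. At $x=\frac{3+\sqrt5}{2}$ the path factor $x^2-3x+1$ vanishes, which makes the sign computation easy. At the left endpoint, a first-order expansion in $1/n$ should suffice.
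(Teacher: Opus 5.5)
Your proposal is correct and is essentially the paper's proof: the splitting \eqref{eq:spec-decomp} together with Theorem~\ref{thm:lgs} gives the ``only if'' direction. For a firefly, the pairwise edge-disjoint blades give $\Spec^{+}(\Lup)=\{3^{(s)}\}$ and $b_1=0$, so that, once $\mu_1>3$ is known, $\lambda_2(L_1)=\max\{\mu_2,\,3\cdot[s\ge1]\}$; the paper gets $\mu_1>3$ from $\mu_1\ge\deg(v)+1\ge4$, since $\deg(v)=n-1-t\ge\tfrac{n-1}{2}\ge3$, and you should give that justification rather than just ``$n\ge7$''. The only difference is that the paper reads off the $s=0$ values of $\mu_2$ directly from Theorem~\ref{thm:lgs}, so your bullet list of firefly Laplacian eigenvalues and your cubic analysis for $t=1$ are unnecessary; the list also misattributes the eigenvalue $3$, which comes from the within-blade vectors $e_{a_i}-e_{b_i}$, whereas antisymmetric combinations across blades give $1$. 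If you do carry out the cubic analysis, note that at $n=7$ a first-order expansion at $\tfrac{3+\sqrt5}{2}-\tfrac17$ has the wrong sign, so that endpoint must be evaluated exactly.
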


The proof needs no characteristic-polynomial computation. By~\eqref{eq:spec-decomp} the inequality $\lambda_2(L_1)\le 3$ forces $\mu_2(G)\le 3$ and hence, by Li, Guo and Shiu, the firefly structure; the exact values then come from the observation that the triangles of a firefly are pairwise edge-disjoint, so that $\Lup$ contributes only the eigenvalue $3$---the up Laplacian eigenvalue of a single triangle---with multiplicity equal to the number of triangles. Thus the boundary value $3$ is intrinsic to one triangle, and the whole classification reduces to~\eqref{eq:spec-decomp} together with the Laplacian theorem of~\cite{LGS}.

The paper is organized as follows. Section~\ref{sec:def} collects the Hodge decomposition and the cycle-space localization in dimension one. Section~\ref{sec:main} proves Theorems~\ref{thm:triangle-free} and~\ref{thm:main} in Subsection~\ref{subsec:largest} and Theorem~\ref{thm:lambda2} in Subsection~\ref{subsec:second}; we give the graph case first, where the localization is visible as a statement about the algebraic connectivity of the complement. Section~\ref{sec:general} carries the argument out in every dimension and proves Theorems~\ref{thm:tf-k-intro}, \ref{thm:shifted-intro} and~\ref{thm:maink-intro}, in the form of Theorems~\ref{thm:tf-k}, \ref{thm:shifted} and~\ref{thm:maink}. Section~\ref{sec:open} collects concluding remarks.

\section{Definitions and Tools}\label{sec:def}

Throughout, $G$ is a graph on $[n]$ with complement $\Gbar$, and we work in the \emph{full edge space} $U=\mathbb{R}^{\binom{[n]}{2}}$ indexed by all pairs of $[n]$. A vector $x\in U$ is regarded as a flow: $x_{ij}$ is its value on the oriented pair $i<j$, and we set $x_{ji}=-x_{ij}$. For a $3$-subset $S=\{p,q,w\}$, let $v_S\in U$ be the boundary $\partial_2 S$, supported on the three pairs of $S$ with the cyclic signs, so that
\[
\crc_{pqw}(x):=v_{\{p,q,w\}}^{\top}x=x_{pq}+x_{qw}+x_{wp}
\]
is the \emph{circulation} of $x$ around $S$. Identifying $\mathbb{R}^{E(G)}$ with the corresponding coordinate subspace of $U$, and noting that the pairs outside every triangle contribute trivially, we have
\[
\Lup(G)=\sum_{S\in T(G)}v_Sv_S^{\top},
\]
where $T(G)$ is the set of triangles of $G$; this representation changes neither the nonzero spectrum nor $\lambda_{\max}$.

The reduction in \eqref{eq:reduce} is a standard consequence of the Hodge decomposition $C_1=\im\partial_2\oplus\ker L_1\oplus\im\partial_1^{\top}$ (see Eckmann~\cite{E} or Horak and Jost~\cite{HJ}).

\begin{lem}\label{lem:hodge}
For every graph $G$, we have $\lambda_{\max}(L_1)=\max\{\mu_1(G),\,\lambda_{\max}(\Lup)\}$. In particular, $\lambda_{\max}(L_1)=\mu_1(G)$ if and only if $\lambda_{\max}(\Lup)\le\mu_1(G)$.
\end{lem}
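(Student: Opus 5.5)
The plan is to read off the spectrum of $L_1=\Lup+\Ldown$ from the Hodge decomposition $C_1=\im\partial_2\oplus\ker L_1\oplus\im\partial_1^{\top}$, where $C_1=\mathbb{R}^{E(G)}$ is the edge space of the clique complex. The key input is the chain-complex identity $\partial_1\partial_2=0$. This gives $\Ldown\Lup=\partial_1^{\top}(\partial_1\partial_2)\partial_2^{\top}=0$, and symmetrically $\Lup\Ldown=0$.

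First I would check that the decomposition is orthogonal and $L_1$-invariant. Orthogonality of $\im\partial_2$ and $\im\partial_1^{\top}$ is again $\partial_1\partial_2=0$. The space $\ker L_1$ equals $\ker\partial_2^{\top}\cap\ker\partial_1$, because $x^{\top}L_1x=\|\partial_2^{\top}x\|^2+\|\partial_1x\|^2$; hence it is the orthogonal complement of the other two summands.

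Next I would compute how $L_1$ acts on each summand.
\begin{itemize}
\item On $\im\partial_2$ we have $\Ldown=0$, because $\partial_1\partial_2=0$. So $L_1$ acts there as $\Lup$. Since $\im\partial_2=\im\Lup$ is $\Lup$-invariant, this restriction carries exactly the nonzero spectrum of $\Lup$, plus possibly zeros.
\item On $\im\partial_1^{\top}$ we have $\Lup=0$, because $\partial_2^{\top}\partial_1^{\top}=(\partial_1\partial_2)^{\top}=0$. So $L_1$ acts there as $\Ldown$, whose nonzero spectrum is that of $\Ldown$.
\item On $\ker L_1$ the operator acts as $0$.
\end{itemize}

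Since the three summands are orthogonal and invariant, $\lambda_{\max}(L_1)$ is the maximum of the three block maxima, namely $\max\{\lambda_{\max}(\Lup),\lambda_{\max}(\Ldown),0\}$. All of these are nonnegative, so the zero block does not change the maximum.

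Finally I would apply \eqref{eq:down-mu}, which gives $\lambda_{\max}(\Ldown)=\mu_1(G)$. This holds because $\partial_1^{\top}\partial_1$ and $\partial_1\partial_1^{\top}=L(G)$ share their nonzero spectrum. We then get $\lambda_{\max}(L_1)=\max\{\mu_1(G),\lambda_{\max}(\Lup)\}$.

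The ``in particular'' clause is immediate. The maximum equals $\mu_1$ exactly when the other term is at most $\mu_1$.

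There is no real obstacle here. The only point needing care is that the identification of $\Lup$ with $\sum_S v_Sv_S^{\top}$ on the full space $U$ preserves $\lambda_{\max}$; the paper already notes this. One might also worry about the degenerate case with no triangles. There $\Lup=0$, and the formula still holds since $\mu_1\ge0$.
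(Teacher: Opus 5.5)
Your proposal is correct and matches the paper's proof: you block-diagonalize $L_1$ along the Hodge decomposition using $\partial_1\partial_2=0$, note that $L_1$ acts as $\Lup$, $0$, and $\Ldown$ on the three summands, and conclude with \eqref{eq:down-mu}. Your extra checks (orthogonality, $\ker L_1=\ker\partial_2^{\top}\cap\ker\partial_1$, and the triangle-free case) spell out details that the paper takes from the standard decomposition.
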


\begin{proof}
On $\im\partial_2$ the down part $\Ldown=\partial_1^{\top}\partial_1$ vanishes, because $\partial_1\partial_2=0$; on $\im\partial_1^{\top}$ the up part $\Lup=\partial_2\partial_2^{\top}$ vanishes, because $\partial_2^{\top}\partial_1^{\top}=(\partial_1\partial_2)^{\top}=0$; and $L_1$ vanishes on $\ker L_1$. Hence $L_1$ is block diagonal for this orthogonal decomposition, with blocks $\Lup|_{\im\partial_2}$, $0$, and $\Ldown|_{\im\partial_1^{\top}}$. It follows that $\lambda_{\max}(L_1)=\max\{\lambda_{\max}(\Lup),\lambda_{\max}(\Ldown)\}$, and \eqref{eq:down-mu} completes the proof.
\end{proof}

The same block decomposition determines the whole spectrum of $L_1$, not only its largest eigenvalue. We record this as a separate statement, since it underlies both main results.

\begin{prop}\label{prop:spec-decomp}
Let $G$ be a connected graph with $n$ vertices and $m$ edges. As a multiset,
\[
\Spec(L_1)\ =\ \big\{\mu_1(G),\dots,\mu_{n-1}(G)\big\}\ \uplus\ \Spec^{+}\!\big(\Lup\big)\ \uplus\ \{0^{(b_1)}\},
\]
where $\Spec^{+}(\Lup)$ is the multiset of nonzero eigenvalues of $\Lup$, equivalently of $\partial_2^{\top}\partial_2$, and $b_1=m-(n-1)-\rank\partial_2$. In particular $\lambda_2(L_1)\ge\mu_2(G)$ whenever $n\ge 3$.
\end{prop}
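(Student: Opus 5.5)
We will use the orthogonal Hodge decomposition $C_1=\im\partial_2\oplus\ker L_1\oplus\im\partial_1^{\top}$ from the proof of Lemma~\ref{lem:hodge}, where $C_1=\mathbb{R}^{E(G)}$ has dimension $m$. Since $L_1$ is block diagonal for this decomposition, $\Spec(L_1)$ is the disjoint union of three multisets. These are the spectra of $\Ldown|_{\im\partial_1^{\top}}$, of the zero block on $\ker L_1$, and of $\Lup|_{\im\partial_2}$. We identify each in turn.

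\emph{The down block.} The operators $\Ldown=\partial_1^{\top}\partial_1$ and $L(G)=\partial_1\partial_1^{\top}$ have the same nonzero eigenvalues with multiplicities. Restricted to $\im\partial_1^{\top}=(\ker\partial_1)^{\perp}$, the operator $\Ldown$ is invertible. Hence its spectrum there is exactly the nonzero spectrum of $L(G)$. Since $G$ is connected, $\ker L(G)$ is spanned by the all-ones vector, so the nonzero spectrum of $L(G)$ is $\{\mu_1,\dots,\mu_{n-1}\}$. It follows that $\dim\im\partial_1^{\top}=\rank\partial_1=n-1$.

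\emph{The up block.} Likewise $\Lup=\partial_2\partial_2^{\top}$ is invertible on $\im\partial_2=(\ker\partial_2^{\top})^{\perp}$. Its spectrum there is $\Spec^{+}(\Lup)$, which equals the nonzero spectrum of $\partial_2^{\top}\partial_2$. This block has dimension $\rank\partial_2$.

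\emph{The zero block.} By orthogonality, $\dim\ker L_1=m-(n-1)-\rank\partial_2=b_1$, which gives the summand $\{0^{(b_1)}\}$. The one point to check is that the first two blocks contribute no zero eigenvalues, so that the multiplicity of $0$ is exactly $b_1$. This is the invertibility observed above. We also use $\ker L_1=\ker\partial_1\cap\ker\partial_2^{\top}$, which holds because $x^{\top}L_1x=\|\partial_2^{\top}x\|^2+\|\partial_1x\|^2$.

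\emph{The consequence $\lambda_2(L_1)\ge\mu_2(G)$.} For $n\ge3$, both $\mu_1$ and $\mu_2$ occur in the multiset as two separate entries, since $n-1\ge2$. The second largest element of a multiset is at least the second largest element of any sub-multiset, so $\lambda_2(L_1)\ge\mu_2(G)$.

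There is no real obstacle. The only care needed is the multiplicity bookkeeping for the eigenvalue $0$ and the use of connectivity to get $\rank\partial_1=n-1$.
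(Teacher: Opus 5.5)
Your proposal is correct and follows essentially the same route as the paper. Both arguments use the block-diagonal form of $L_1$ under the Hodge decomposition, identify the up and down blocks with the nonzero spectra of $\partial_2^{\top}\partial_2$ and $L(G)$, count dimensions to get $b_1$ zeros, and use the sub-multiset argument for $\lambda_2(L_1)\ge\mu_2(G)$. Your explicit check that $\Lup$ and $\Ldown$ are invertible on $\im\partial_2$ and $\im\partial_1^{\top}$ makes the zero-multiplicity bookkeeping slightly more transparent than in the paper's version.
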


\begin{proof}
By the proof of Lemma~\ref{lem:hodge}, $L_1$ is block diagonal with blocks $\Lup|_{\im\partial_2}$, $0|_{\ker L_1}$, and $\Ldown|_{\im\partial_1^{\top}}$. The first block contributes the nonzero eigenvalues of $\Lup$, namely $\Spec^{+}(\Lup)$, since $\Lup$ vanishes on $(\im\partial_2)^{\perp}\supseteq\ker L_1$; these coincide with the nonzero eigenvalues of $\partial_2^{\top}\partial_2$. The third block contributes the nonzero eigenvalues of $\Ldown=\partial_1^{\top}\partial_1$, which are the nonzero eigenvalues of $\partial_1\partial_1^{\top}=L(G)$, that is, $\mu_1,\dots,\mu_{n-1}$ for connected $G$. The middle block contributes $\dim\ker L_1=m-\rank\partial_2-(n-1)=b_1$ zeros. Finally, $\mu_1\ge\mu_2$ both occur in $\Spec(L_1)$, so the second largest element of $\Spec(L_1)$ is at least $\mu_2$.
\end{proof}

We next localize the up Laplacian on the cycle space. Let $\Zone=\ker\partial_1\subseteq U$ be the \emph{cycle space} of $K_n$, the space of divergence-free flows, of dimension $\binom{n}{2}-n+1$. Divergence-freeness means
\begin{equation}\label{eq:divfree}
\sum_{u\,:\,u\ne v}x_{vu}=0\qquad\text{for every }v\in[n]\text{ and every }x\in\Zone .
\end{equation}
The complete clique complex on $[n]$ is the full simplex, and by~\cite{HJ} its Hodge Laplacians satisfy $L_k=nI$ for every $k\ge 1$; in particular $L_1=nI$. On $\Zone$ the down part vanishes by definition, so the up Laplacian of the complete complex satisfies $\big(\sum_{S\in\binom{[n]}{3}}v_Sv_S^{\top}\big)\big|_{\Zone}=nI$. Splitting the sum over $3$-subsets into the triangles and the non-triangles of $G$, and writing
\[
\bar L(G):=\sum_{S\in\binom{[n]}{3}\setminus T(G)}v_Sv_S^{\top}
\]
for the up Laplacian of the \emph{missing} triangles, we obtain on $\Zone$ the identity
\begin{equation}\label{eq:complement}
\Lup(G)\big|_{\Zone}+\bar L(G)\big|_{\Zone}=nI .
\end{equation}

Finally, let $a(\Gbar)$ be the algebraic connectivity of $\Gbar$, that is, the second smallest eigenvalue of $L(\Gbar)$ (see Fiedler~\cite{F}). Since $L(G)+L(\Gbar)=L(K_n)=nI-J$, on the space $\mathbf 1^{\perp}$ the eigenvalues of $L(\Gbar)$ are exactly $\{\,n-\mu_i(G)\,\}$, and in particular
\begin{equation}\label{eq:aGbar}
a(\Gbar)=n-\mu_1(G).
\end{equation}

\section{Main Results}\label{sec:main}

In this section we prove the two main theorems: Theorem~\ref{thm:main} on the largest Helmholtzian eigenvalue in Subsection~\ref{subsec:largest}, and Theorem~\ref{thm:lambda2} on the second largest in Subsection~\ref{subsec:second}.

\subsection{The largest Helmholtzian eigenvalue}\label{subsec:largest}

We first record the cycle-space localization. Each $v_S$ lies in $\Zone$, since $v_S=\partial_2 S$ and $\partial_1\partial_2=0$; consequently both $\Lup(G)$ and $\bar L(G)$ annihilate $\Zone^{\perp}$ and preserve $\Zone$.

\begin{prop}\label{prop:cycle}
For every graph $G$, the maximum of $\Lup(G)$ is attained on $\Zone$, and
\[
\lambda_{\max}\!\big(\Lup(G)\big)=n-\lambda_{\min}\!\big(\bar L(G)\big|_{\Zone}\big).
\]
Consequently, inequality~\eqref{eq:reduce} (Problem~5.5) is equivalent to
\[
\lambda_{\min}\!\big(\bar L(G)\big|_{\Zone}\big)\ \ge\ a(\Gbar).
\]
\end{prop}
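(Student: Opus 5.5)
The plan is to work in the full edge space $U$ with the representation $\Lup(G)=\sum_{S\in T(G)}v_Sv_S^{\top}$, which has the same nonzero spectrum and the same $\lambda_{\max}$ as the edge-indexed operator. First, the maximum is attained on $\Zone$. Every $v_S$ lies in $\Zone$, so $\Lup(G)$ is a sum of rank-one projections onto vectors of $\Zone$. Hence $\Lup(G)$ vanishes on $\Zone^{\perp}$ and maps $U$ into $\Zone$. With respect to $U=\Zone\oplus\Zone^{\perp}$ it is therefore block diagonal with blocks $\Lup(G)|_{\Zone}$ and $0$.

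Since $\Lup(G)$ is positive semidefinite, we get
\[
\lambda_{\max}(\Lup(G))=\max\{\lambda_{\max}(\Lup(G)|_{\Zone}),0\}=\lambda_{\max}(\Lup(G)|_{\Zone}).
\]
This step needs $\Zone\neq 0$, which holds for $n\ge3$ as assumed throughout. The same argument applies verbatim to $\bar L(G)$, so both restrictions are self-adjoint operators on $\Zone$.

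Next, use the identity \eqref{eq:complement}: on $\Zone$ we have $\Lup(G)|_{\Zone}=nI-\bar L(G)|_{\Zone}$. The two restrictions are simultaneously diagonalizable, and their eigenvalues pair as $\lambda\leftrightarrow n-\lambda$. Therefore
\[
\lambda_{\max}(\Lup(G)|_{\Zone})=n-\lambda_{\min}(\bar L(G)|_{\Zone}),
\]
and combining with the first step gives the displayed formula.

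For the consequence, Lemma~\ref{lem:hodge} shows that \eqref{eq:reduce} is exactly $\lambda_{\max}(\Lup(G))\le\mu_1(G)$. By the formula this reads $n-\lambda_{\min}(\bar L(G)|_{\Zone})\le\mu_1(G)$, that is, $\lambda_{\min}(\bar L(G)|_{\Zone})\ge n-\mu_1(G)$. By \eqref{eq:aGbar}, the right-hand side equals $a(\Gbar)$.

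There is no real obstacle here. The only point requiring care is the first step: passing from the edge-indexed $\Lup$ to $U$ and then to $\Zone$ must not lose or create a larger eigenvalue. This is handled by noting that both reductions only add zero eigenvalues and that $\Lup$ is positive semidefinite. Identity \eqref{eq:complement} itself is taken as established from $L_1(K_n)=nI$ on the full simplex.
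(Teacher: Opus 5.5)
Your proof is correct and follows the paper's own argument: each $v_S\in\Zone$ forces $\Lup(G)$ to vanish on $\Zone^{\perp}$, so its maximum is attained on $\Zone$, and then \eqref{eq:complement} and \eqref{eq:aGbar} give the formula and the equivalence; your remarks on positive semidefiniteness and $\Zone\neq 0$ make explicit what the paper leaves implicit. One harmless slip in wording: $v_Sv_S^{\top}$ is a rank-one positive semidefinite operator with eigenvalue $\|v_S\|^2=3$, not a projection, and this does not affect the argument.
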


\begin{proof}
Each $v_S\in\Zone$, so for $x\in\Zone^{\perp}$ we have $\Lup(G)x=\sum_S v_S(v_S^{\top}x)=0$; thus $\Lup(G)$ maps $\Zone$ to itself and vanishes on $\Zone^{\perp}$, and its largest eigenvalue is attained on $\Zone$. The displayed identity is then~\eqref{eq:complement}, and the equivalence follows from~\eqref{eq:aGbar}.
\end{proof}

The next identity is the key to the proof: it evaluates the total circulation of a divergence-free flow around all triangles through a fixed pair.

\begin{lem}\label{lem:identity}
Let $x\in\Zone$ and let $e=\{p,q\}$. Then
\[
\sum_{w\ne p,q}\crc_{pqw}(x)^2=(n+2)\,x_{pq}^2+\sum_{w\ne p,q}\big(x_{qw}-x_{pw}\big)^2 .
\]
\end{lem}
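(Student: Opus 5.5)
The identity comes from expanding each circulation around the fixed pair $\{p,q\}$ and then using divergence-freeness at the two endpoints $p$ and $q$. No spectral input is needed.

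First, for each $w\ne p,q$, use the antisymmetry convention $x_{wp}=-x_{pw}$ to write
\[
\crc_{pqw}(x)=x_{pq}+x_{qw}+x_{wp}=x_{pq}+\big(x_{qw}-x_{pw}\big).
\]
Squaring gives
\[
\crc_{pqw}(x)^2=x_{pq}^2+2x_{pq}\big(x_{qw}-x_{pw}\big)+\big(x_{qw}-x_{pw}\big)^2 .
\]
Summing over the $n-2$ choices of $w$ then gives
\[
\sum_{w\ne p,q}\crc_{pqw}(x)^2=(n-2)\,x_{pq}^2+2x_{pq}\sum_{w\ne p,q}\big(x_{qw}-x_{pw}\big)+\sum_{w\ne p,q}\big(x_{qw}-x_{pw}\big)^2 .
\]
So it remains to show that the linear cross term equals $4x_{pq}^2$.

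For this I would apply the divergence-free condition \eqref{eq:divfree} at the vertex $q$. It reads $\sum_{u\ne q}x_{qu}=0$. Separating off the term $u=p$ gives
\[
\sum_{w\ne p,q}x_{qw}=-x_{qp}=x_{pq}.
\]
Likewise, \eqref{eq:divfree} at $p$, with the term $u=q$ separated off, gives
\[
\sum_{w\ne p,q}x_{pw}=-x_{pq}.
\]
Hence $\sum_{w\ne p,q}(x_{qw}-x_{pw})=2x_{pq}$, so the cross term is $2x_{pq}\cdot 2x_{pq}=4x_{pq}^2$. The total coefficient of $x_{pq}^2$ is therefore $(n-2)+4=n+2$, which is the claimed identity.

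There is no real obstacle here. The only point needing care is sign bookkeeping under the convention $x_{ji}=-x_{ij}$, namely getting $x_{wp}=-x_{pw}$ and $x_{qp}=-x_{pq}$ right. This is exactly where the $+4$, rather than a cancelling $-4$, comes from.
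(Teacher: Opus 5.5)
Your proposal is correct and follows the paper's proof essentially verbatim: rewrite $\crc_{pqw}(x)=x_{pq}+(x_{qw}-x_{pw})$, square and sum over $w$, then use divergence-freeness at $q$ and at $p$ to get $\sum_{w\ne p,q}(x_{qw}-x_{pw})=2x_{pq}$. The cross term is therefore $4x_{pq}^2$, giving the coefficient $(n-2)+4=n+2$.
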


\begin{proof}
Write $\crc_{pqw}(x)=x_{pq}+x_{qw}+x_{wp}=x_{pq}+(x_{qw}-x_{pw})$. Squaring and summing the $n-2$ terms, we obtain
\[
\sum_{w}\crc_{pqw}(x)^2=(n-2)x_{pq}^2+2x_{pq}\sum_{w}(x_{qw}-x_{pw})+\sum_{w}(x_{qw}-x_{pw})^2 .
\]
By~\eqref{eq:divfree} at $q$, we have $\sum_{w\ne p,q}x_{qw}=-x_{qp}=x_{pq}$, and at $p$, $\sum_{w\ne p,q}x_{pw}=-x_{pq}$; hence $\sum_{w}(x_{qw}-x_{pw})=2x_{pq}$ and the middle term equals $4x_{pq}^2$. Adding $(n-2)x_{pq}^2+4x_{pq}^2=(n+2)x_{pq}^2$ gives the claim.
\end{proof}

Both Theorem~\ref{thm:main} and Theorem~\ref{thm:triangle-free} pass through one estimate, on the matrix
\[
Q:=\sum_{e\in E(\Gbar)}\ \sum_{w\notin e}v_{\,e\cup w}\,v_{\,e\cup w}^{\top}
=\sum_{S}\omega(S)\,v_Sv_S^{\top},
\]
which weights each non-triangle $S$ of $G$ by its number $\omega(S)\in\{1,2,3\}$ of pairs that are non-edges of $G$. The second equality holds because every $3$-set containing a fixed $e\in E(\Gbar)$ is a non-triangle, and each non-triangle is counted once for each of its $\omega(S)$ missing edges.

\begin{lem}\label{lem:Qbound}
For every $x\in\Zone$, we have $x^{\top}Qx\ge 2\,a(\Gbar)\,\|x\|^2$.
\end{lem}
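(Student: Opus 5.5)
The plan is to evaluate $x^{\top}Qx$ edge by edge using Lemma~\ref{lem:identity}, and then to recognize the resulting sum as a Laplacian quadratic form of $\Gbar$. By the definition of $Q$,
\[
x^{\top}Qx=\sum_{e=\{p,q\}\in E(\Gbar)}\ \sum_{w\ne p,q}\crc_{pqw}(x)^2 .
\]
Applying Lemma~\ref{lem:identity} to each missing edge gives
\[
x^{\top}Qx=\sum_{\{p,q\}\in E(\Gbar)}\Big[(n+2)x_{pq}^2+\sum_{w\ne p,q}(x_{qw}-x_{pw})^2\Big].
\]
The first summand is nonnegative. I will either discard it or keep it as slack, depending on how the second summand compares with $2a(\Gbar)\|x\|^2$.

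For the second summand, fix a vertex $w$ and define $f_w\colon[n]\to\mathbb{R}$ by $f_w(u)=x_{uw}$ for $u\ne w$ and $f_w(w)=0$. Because $x$ is antisymmetric, $x_{qw}-x_{pw}=f_w(q)-f_w(p)$. The terms with $w\in\{p,q\}$ are missing from the inner sum, so I would add them back by hand. For $w=p$ the term would be $(f_p(q)-f_p(p))^2=x_{qp}^2=x_{pq}^2$, and likewise for $w=q$. Hence
\[
\sum_{\{p,q\}\in E(\Gbar)}\ \sum_{w\ne p,q}(x_{qw}-x_{pw})^2=\sum_{w\in[n]} f_w^{\top}L(\Gbar)f_w-2\sum_{\{p,q\}\in E(\Gbar)}x_{pq}^2 .
\]
The subtracted term is absorbed by the first summand, since $(n+2)-2\ge 0$. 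Therefore
\[
x^{\top}Qx\ \ge\ \sum_{w} f_w^{\top}L(\Gbar)f_w .
\]

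Divergence-freeness now enters. By \eqref{eq:divfree} at $w$, $\sum_u f_w(u)=\sum_{u\ne w}x_{uw}=-\sum_{u\ne w}x_{wu}=0$, so each $f_w\in\mathbf 1^{\perp}$. Fiedler's characterization then gives $f_w^{\top}L(\Gbar)f_w\ge a(\Gbar)\|f_w\|^2$. Summing over $w$,
\[
\sum_w\|f_w\|^2=\sum_w\sum_{u\ne w}x_{uw}^2=2\|x\|^2,
\]
because each pair is counted once from each endpoint. This yields $x^{\top}Qx\ge 2a(\Gbar)\|x\|^2$, which is the claim.

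The main point to get right is the bookkeeping in the second step. One has to check that completing the inner sum to all $w$ costs exactly $2x_{pq}^2$ per missing edge, and that this loss is dominated by the $(n+2)x_{pq}^2$ term from Lemma~\ref{lem:identity}. One also has to check that the sign conventions give $x_{qw}-x_{pw}=f_w(q)-f_w(p)$ with $f_w(w)=0$. Neither step should be a real obstacle. The conceptual content is that each column $f_w$ of the flow is a mean-zero vertex function, which is exactly where the algebraic connectivity of $\Gbar$ appears.
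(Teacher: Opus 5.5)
Your proof is correct and is essentially the paper's argument: you use the same vertex functions $f_w$ (the paper's $g^{w}$), the same divergence-freeness to make them orthogonal to $\mathbf 1$, and the same $2x_{pq}^2$ correction from the terms $w\in\{p,q\}$. The paper handles that correction by splitting the Dirichlet sum of each $g^{w}$ before summing over $w$, whereas you complete the sum first; both routes arrive at $x^{\top}Qx\ge n\sum_{e\in E(\Gbar)}x_e^2+2a(\Gbar)\|x\|^2$.
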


\begin{proof}
By the definition of $Q$ and Lemma~\ref{lem:identity} applied to each $e=\{p,q\}\in E(\Gbar)$,
\begin{equation}\label{eq:Qsplit}
x^{\top}Qx=\sum_{e\in E(\Gbar)}\sum_{w}\crc_{pqw}(x)^2
=\sum_{e\in E(\Gbar)}\Big[(n+2)x_e^2+\sum_{w}(x_{qw}-x_{pw})^2\Big].
\end{equation}
Fix $x\in\Zone$; by homogeneity we may assume $\|x\|^2=1$. For a fixed vertex $w$, define $g^{w}\in\mathbb{R}^{[n]}$ by $g^{w}_u=x_{uw}$ for $u\ne w$ and $g^{w}_w=0$. By~\eqref{eq:divfree} at $w$, $\sum_{u\ne w}x_{uw}=0$, so $g^{w}\perp\mathbf 1$. Splitting the Dirichlet sum of $g^{w}$ over $\Gbar$ according to whether a $\Gbar$-edge meets $w$,
\[
(g^{w})^{\top}L(\Gbar)\,g^{w}=\sum_{\substack{pq\in E(\Gbar)\\ p,q\ne w}}\!\!(g^{w}_p-g^{w}_q)^2+\sum_{p:\,pw\in E(\Gbar)}\!\!(g^{w}_p)^2 ,
\]
where the last sum used $g^{w}_w=0$. Since $g^{w}\perp\mathbf 1$, the left side is at least $a(\Gbar)\|g^{w}\|^2$, so
\[
\sum_{\substack{pq\in E(\Gbar)\\ p,q\ne w}}(g^{w}_p-g^{w}_q)^2\ \ge\ a(\Gbar)\,\|g^{w}\|^2-\sum_{p:\,pw\in E(\Gbar)}x_{pw}^2 .
\]
Now sum over $w$. The left side equals $\sum_{e=pq\in E(\Gbar)}\sum_{w}(x_{qw}-x_{pw})^2$. Moreover $\sum_{w}\|g^{w}\|^2=\sum_{w}\sum_{u\ne w}x_{uw}^2=2\|x\|^2=2$, since each unordered pair is counted from both of its endpoints, and likewise $\sum_{w}\sum_{p:\,pw\in E(\Gbar)}x_{pw}^2=2\sum_{e\in E(\Gbar)}x_e^2$. Hence
\[
\sum_{e\in E(\Gbar)}\sum_{w}(x_{qw}-x_{pw})^2\ \ge\ 2\,a(\Gbar)-2\sum_{e\in E(\Gbar)}x_e^2 .
\]
Substituting this into~\eqref{eq:Qsplit} gives
\[
x^{\top}Qx\ \ge\ (n+2)\sum_{e\in E(\Gbar)}x_e^2+2\,a(\Gbar)-2\sum_{e\in E(\Gbar)}x_e^2
=n\sum_{e\in E(\Gbar)}x_e^2+2\,a(\Gbar)\ \ge\ 2\,a(\Gbar),
\]
because $\sum_{e\in E(\Gbar)}x_e^2\ge 0$.
\end{proof}

\begin{proof}[Proof of Theorem~\ref{thm:main}]
Since $\omega(S)\le 3$ for every non-triangle $S$, we have $\bar L(G)=\sum_S v_Sv_S^{\top}\succeq\frac13 Q$. By Lemma~\ref{lem:Qbound}, $x^{\top}\bar L(G)x\ge\frac13\cdot 2\,a(\Gbar)\|x\|^2=\frac23\,a(\Gbar)\|x\|^2$ for every $x\in\Zone$, that is $\lambda_{\min}(\bar L|_{\Zone})\ge\frac23\,a(\Gbar)$. By Proposition~\ref{prop:cycle} and~\eqref{eq:aGbar},
\[
\lambda_{\max}(\Lup)=n-\lambda_{\min}(\bar L|_{\Zone})\le n-\frac23 a(\Gbar)=n-\frac23\big(n-\mu_1\big)=\mu_1+\frac13(n-\mu_1).\qedhere
\]
\end{proof}

Theorem~\ref{thm:main} yields the integrality ceiling of Duval and Reiner~\cite{DR} as an immediate corollary.

\begin{cor}\label{cor:ceiling}
For every $n$-vertex graph $G$, we have $\lambda_{\max}(\Lup(G))\le n$, with equality only if $\mu_1(G)=n$.
\end{cor}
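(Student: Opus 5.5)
The corollary follows directly from Theorem~\ref{thm:main}, combined with the Laplacian bound $\mu_1(G)\le n$. For the inequality, note that $\mu_1(G)\le n$ holds because the eigenvalues of $L(\Gbar)$ on $\mathbf 1^{\perp}$ are $n-\mu_i(G)$ and are nonnegative, as recorded in~\eqref{eq:aGbar}. Equivalently, $a(\Gbar)\ge 0$. Substituting into Theorem~\ref{thm:main} gives
\[
\lambda_{\max}(\Lup(G))\le \mu_1+\tfrac13(n-\mu_1)=n-\tfrac23\,(n-\mu_1)\le n,
\]
since $n-\mu_1\ge 0$.

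For the equality clause, suppose $\lambda_{\max}(\Lup(G))=n$. Then the chain above forces $\tfrac23(n-\mu_1)\le 0$. Together with $n-\mu_1\ge 0$, this gives $\mu_1(G)=n$.

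Nothing here is delicate. The only point to check is that Theorem~\ref{thm:main} was proved for every $n$-vertex graph with no extra hypotheses. This holds: its proof uses only Proposition~\ref{prop:cycle}, Lemma~\ref{lem:Qbound}, and the identity $a(\Gbar)=n-\mu_1(G)$, all of which hold for arbitrary $G$. If $G$ has no triangles, $\Lup(G)=0$ and the statement is trivial anyway.
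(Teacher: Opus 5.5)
Your proof is correct and is essentially the paper's argument: substitute $\mu_1(G)\le n$ into Theorem~\ref{thm:main} to get $\lambda_{\max}(\Lup)\le\tfrac23\mu_1+\tfrac13 n\le n$, with the last inequality strict unless $\mu_1=n$. Justifying $\mu_1\le n$ through the positive semidefiniteness of $L(\Gbar)$ on $\mathbf 1^{\perp}$ simply makes explicit the one fact the paper uses without comment.
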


\begin{proof}
Since $\mu_1(G)\le n$, Theorem~\ref{thm:main} gives $\lambda_{\max}(\Lup)\le\mu_1+\tfrac13(n-\mu_1)=\tfrac23\mu_1+\tfrac13 n\le n$, where the last inequality is strict unless $\mu_1=n$.
\end{proof}

The same estimate settles Problem~5.5 outright once the complement has no triangle, since then the weights $\omega(S)$ never reach $3$.

\begin{proof}[Proof of Theorem~\ref{thm:triangle-free}]
The complement $\Gbar$ is triangle-free if and only if $G$ has no independent set of size three, that is $\alpha(G)\le 2$. Suppose $\Gbar$ is triangle-free. Then no $3$-set has all three of its pairs in $E(\Gbar)$, so every non-triangle $S$ of $G$ has $\omega(S)\le 2$, and hence
\[
\bar L(G)=\sum_{S}v_Sv_S^{\top}\ \succeq\ \frac12\sum_{S}\omega(S)\,v_Sv_S^{\top}=\frac12\,Q.
\]
By Lemma~\ref{lem:Qbound}, $x^{\top}\bar L(G)x\ge a(\Gbar)\|x\|^2$ for every $x\in\Zone$, that is $\lambda_{\min}(\bar L|_{\Zone})\ge a(\Gbar)$. By Proposition~\ref{prop:cycle} this is inequality~\eqref{eq:reduce}, so $\lambda_{\max}(\Lup)\le\mu_1$, and Lemma~\ref{lem:hodge} gives $\lambda_{\max}(L_1)=\mu_1$.

For the sharpness we may take $n\ge4$: at $n=3$ the graph $K_{n-1}\cup K_1=K_2\cup K_1$ has no triangle, so $\Lup=0$ and the equality $\lambda_{\max}(L_1)=\mu_1$ holds through the down part alone. Let $n\ge4$ and take $G=K_{n-1}\cup K_1$, whose complement is the star $K_{1,n-1}$ (Figure~\ref{fig:tight}). Here $\mu_1(G)=n-1$ and $a(\Gbar)=1$, and the triangles of $K_n$ missing from $G$ are exactly those through the isolated vertex; one computes $\lambda_{\max}(\Lup)=\mu_1=n-1$, so $\lambda_{\min}(\bar L|_{\Zone})=a(\Gbar)=1$ and~\eqref{eq:reduce} is an equality. Moreover every missing triangle here has exactly two missing edges, so $Q=2\bar L$ and the estimate of Lemma~\ref{lem:Qbound} is attained as well.
\end{proof}

\begin{figure}[ht]
\centering
\begin{minipage}{0.44\textwidth}
\centering
\begin{tikzpicture}[scale=1.0,every node/.style={circle,draw=black,fill=black,inner sep=1.7pt}]
\foreach \i in {1,...,5}{\coordinate (P\i) at ({90+72*(\i-1)}:1.5cm);}
\coordinate (C) at (0,0);
\draw (P1)--(P2)--(P3)--(P4)--(P5)--(P1);
\draw (P1)--(P3)--(P5)--(P2)--(P4)--(P1);
\foreach \i in {1,...,5}{\node at (P\i) {};}
\node[fill=white] at (C) {};
\end{tikzpicture}\\[3pt]
{\small (a)\ \ $G=K_{5}\cup K_{1}$}
\end{minipage}
\hfill
\begin{minipage}{0.44\textwidth}
\centering
\begin{tikzpicture}[scale=1.0,every node/.style={circle,draw=black,fill=black,inner sep=1.7pt}]
\foreach \i in {1,...,5}{\coordinate (P\i) at ({90+72*(\i-1)}:1.5cm);}
\coordinate (C) at (0,0);
\foreach \i in {1,...,5}{\draw (C)--(P\i);}
\foreach \i in {1,...,5}{\node at (P\i) {};}
\node at (C) {};
\end{tikzpicture}\\[3pt]
{\small (b)\ \ $\Gbar=K_{1,5}$}
\end{minipage}
\caption{The sharp case of Theorem~\ref{thm:triangle-free} at $n=6$: $G=K_{n-1}\cup K_1$ has triangle-free complement $K_{1,n-1}$, and $\lambda_{\max}(\Lup)=\mu_1=n-1$. The same six vertices carry the complementary edge sets, with the center isolated in $G$ and the hub in $\Gbar$.}
\label{fig:tight}
\end{figure}

We close this section with a remark on the two extreme cases of Theorem~\ref{thm:main}, namely when $\lambda_{\max}(\Lup)$ meets the ceiling $n$ and when it meets $\mu_1$.

\begin{rem}\label{rem:equality}
By Corollary~\ref{cor:ceiling}, the ceiling $\lambda_{\max}(\Lup)=n$ is attained only when $\mu_1(G)=n$, that is, only when $G$ is a join. The converse fails. For $n\ge 4$, let $G^{*}=K_1\vee\big(K_2\cup(n-3)K_1\big)$ be the join of a single vertex with the disjoint union of an edge and $n-3$ isolated vertices. Then $G^{*}$ is a join, so $\mu_1(G^{*})=n$, while the only triangle of $G^{*}$ is the one spanned by the hub and the edge of $K_2$; hence $\Lup(G^{*})$ has rank one and $\lambda_{\max}(\Lup(G^{*}))=3$ for every such $n$. Thus a join can fall arbitrarily far below the ceiling, and being a join is necessary but not sufficient for $\lambda_{\max}(\Lup)=n$. By contrast, the complete graph $K_n$ attains the ceiling, since $L_1=nI$ on $\Zone$.

At the other extreme, $\lambda_{\max}(\Lup)$ meets $\mu_1$ exactly when~\eqref{eq:reduce} is an equality. Theorem~\ref{thm:triangle-free} supplies the inequality for every $G$ with triangle-free complement, but not the equality: the self-complementary $G=C_5$ has $\alpha(G)=2$, yet $\Lup=0$ while $\mu_1=\tfrac{5+\sqrt5}{2}$. Equality does occur away from the ceiling, for instance at $K_{n-1}\cup K_1$ with $n\ge4$, a non-join with $\lambda_{\max}(\Lup)=\mu_1=n-1$.
\end{rem}

\subsection{The second largest Helmholtzian eigenvalue}\label{subsec:second}

We now turn to $\lambda_2(L_1)$ and prove Theorem~\ref{thm:lambda2}. The whole argument runs through the spectral decomposition~\eqref{eq:spec-decomp}. We begin by fixing the family that will appear.

\begin{defn}\label{def:firefly}
For integers $s,t,k\ge 0$, the \emph{firefly graph} $F_{s,t,k}$ is obtained from a center vertex $v$ by attaching $s$ triangles, $t$ pendant paths of length two, and $k$ pendant edges, where the $s+t+k$ attached pieces are vertex-disjoint apart from the common vertex $v$ (see Figure~\ref{fig:firefly}). Thus $F_{s,t,k}$ has $n=2s+2t+k+1$ vertices, and we write it as $F_{s,t,n-2s-2t-1}$ when the order is fixed.
\end{defn}

\begin{figure}[ht]
\centering
\begin{tikzpicture}[scale=0.95,
  vtx/.style={circle,draw=black,fill=black,inner sep=1.6pt}]
  \node[vtx,label={below right:$v$}] (v) at (0,0) {};
  \node[vtx,label={above:$a_1$}] (a1) at (-1.7,2.2) {};
  \node[vtx,label={above:$b_1$}] (b1) at (-0.7,2.7) {};
  \node[vtx] (a2) at (0.7,2.7) {};
  \node[vtx] (b2) at (1.7,2.2) {};
  \draw (v)--(a1)--(b1)--(v);
  \draw (v)--(a2)--(b2)--(v);
  \node[vtx,label={above:$c_1$}] (c1) at (-2.7,0.35) {};
  \node[vtx,label={left:$d_1$}] (d1) at (-3.7,0.7) {};
  \draw (v)--(c1)--(d1);
  \node[vtx] (c2) at (2.7,0.35) {};
  \node[vtx] (d2) at (3.7,0.7) {};
  \draw (v)--(c2)--(d2);
  \node[vtx,label={below:$e_1$}] (e1) at (-1.4,-2.0) {};
  \node[vtx] (e2) at (0,-2.4) {};
  \node[vtx] (e3) at (1.4,-2.0) {};
  \draw (v)--(e1);
  \draw (v)--(e2);
  \draw (v)--(e3);
\end{tikzpicture}
\caption{The firefly graph $F_{2,2,3}$. The center $v$ carries $s=2$ triangles, i.e.\ the blades $\{v,a_i,b_i\}$, together with $t=2$ pendant paths of length two and $k=3$ pendant edges. The blades are the only triangles and are pairwise edge-disjoint, so each contributes the up Laplacian eigenvalue $3$.}
\label{fig:firefly}
\end{figure}

The structural input on the Laplacian side is the classification of Li, Guo and Shiu.

\begin{thm}[Li, Guo and Shiu~\cite{LGS}]\label{thm:lgs}
Let $G$ be a connected graph of order $n\ge 7$. Then $\mu_2(G)\le 3$ if and only if $G$ is a firefly graph $F_{s,t,n-2s-2t-1}$. Moreover, for the triangle-free fireflies,
\[
\mu_2\big(F_{0,0,n-1}\big)=1,\qquad
\tfrac{3+\sqrt5}{2}-\tfrac1n<\mu_2\big(F_{0,1,n-3}\big)<\tfrac{3+\sqrt5}{2},\qquad
\mu_2\big(F_{0,t,n-2t-1}\big)=\tfrac{3+\sqrt5}{2}\ \ (t\ge 2).
\]
\end{thm}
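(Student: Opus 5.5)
The plan is a forbidden-subgraph argument combined with a structural reduction to a single "hub" vertex; the explicit values are then computed by symmetry.

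\emph{Structural reformulation.} For $n\ge 7$, a connected graph $G$ is a firefly if and only if there is a vertex $v$ such that every component of $G-v$ has at most two vertices. Given such $v$, each component of $G-v$ is one of three types:
\begin{itemize}
\item a single vertex, giving a pendant edge at $v$;
\item a $K_2$ joined to $v$ by one edge, giving a pendant path of length two;
\item a $K_2$ joined to $v$ by two edges, giving a triangle.
\end{itemize}
Connectivity excludes any other type. So the ``only if'' direction amounts to showing that if no such $v$ exists, then $\mu_2(G)>3$.

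\emph{Two monotonicity tools.}
\begin{itemize}
\item Laplacian eigenvalues do not increase under edge deletion: $\mu_i(G)\ge\mu_i(G-e)$, by Courant--Fischer since $L(G)-L(G-e)\succeq0$. Hence $\mu_2(G)\ge\mu_2(H)$ for every spanning subgraph $H$.
\item For a connected graph $H$ on at least four vertices, $\mu_1(H)>3$. Indeed $\mu_1\ge\Delta+1$, with equality forcing $\Delta=|H|-1\ge3$; and when $\Delta\le2$, $H$ is a path or cycle of order $\ge4$, and $\mu_1(P_4)=2+\sqrt2>3$.
\end{itemize}
Consequently $G$ cannot contain two vertex-disjoint connected subgraphs of order $4$ each: taking $H$ to be their union plus isolated vertices would give $\mu_2(H)\ge\min\{\mu_1,\mu_1\}>3$. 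More generally, two disjoint connected pieces $H_1,H_2$ with $\mu_1(H_i)>3$ are forbidden. A single connected forbidden piece is also available, for instance $P_7$, with $\mu_2(P_7)=2+2\cos(2\pi/7)>3$.

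\emph{Case analysis (expected main obstacle).} Let $v$ be a vertex of maximum degree, and suppose some component $C$ of $G-v$ has at least $3$ vertices.
\begin{itemize}
\item If $C$ together with its attachment contains a connected $4$-vertex piece avoiding $v$ (for instance $|C|\ge4$), I will use $n\ge7$ to build a second disjoint connected $4$-set through $v$ from the other neighbours of $v$, or else show $G$ has very few vertices outside $C\cup\{v\}$.
\item In the latter situation I will instead locate a long induced path ($P_7$-like) or a vertex of degree $\ge 4$ other than $v$, and then use the known bound $\mu_2\ge d_2$ (second largest degree) to get $\mu_2>3$.
\item If $|C|=3$, then $C\cup\{v\}$ is connected of order $4$, and the remaining $\ge3$ vertices must form a connected $4$-set with something, or force a second hub; each subcase again yields one of the two forbidden configurations.
\end{itemize}
I expect this bookkeeping, especially the small configurations where $G$ is ``almost'' a firefly with one extra edge, to be the laborious part. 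I would organise it by the number of vertices at distance $2$ from $v$.

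\emph{Exact values.} For the ``if'' direction and the listed values I will use the automorphisms of $F_{s,t,k}$.
\begin{itemize}
\item Vectors that are antisymmetric across two identical blades are eigenvectors supported away from $v$. Two triangles give eigenvalue $3$; two pendant $P_3$'s give the roots of $\lambda^2-3\lambda+1$, the largest being $\tfrac{3+\sqrt5}{2}$; two pendant edges give eigenvalue $1$.
\item The symmetric part reduces to a small quotient (equitable-partition) matrix, whose largest root is $\mu_1$ and whose second root stays below the values above.
\item For $F_{0,1,n-3}$ the quotient is a cubic. Evaluating its sign at $\tfrac{3+\sqrt5}{2}$ and at $\tfrac{3+\sqrt5}{2}-\tfrac1n$ gives the stated interval.
\item Finally, an upper bound $\mu_2\le3$ for every firefly follows from interlacing after deleting the hub: $\mu_2(G)\le\mu_1(G-v)+1$, and each component of $G-v$ has at most two vertices, so $\mu_1(G-v)\le2$.
\end{itemize}
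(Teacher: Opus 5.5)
The paper gives no proof of this statement; it is imported from Li, Guo and Shiu. So your plan has to stand on its own, and it has a genuine gap in the ``only if'' direction.

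What does work: your ``if'' direction, $\mu_2(G)\le\mu_1(G-v)+1$ (Cauchy interlacing on the principal submatrix $L(G-v)+D$, then Weyl), is sound, and so is the symmetry/quotient route to the listed values. For $F_{0,1,n-3}$ the relevant cubic is $g(\lambda)=\lambda^3-(n+2)\lambda^2+(3n-2)\lambda-n$. It satisfies $g(\tfrac{3+\sqrt5}{2})=-1$ and $g(\tfrac{3+\sqrt5}{2}-\tfrac1n)>0$ for all $n\ge7$, but only by about $0.054$ at $n=7$, so do that sign check exactly.

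The gap is that your three detectors do not suffice: two disjoint connected $4$-vertex subgraphs, a $P_7$, and $d_2\ge4$. Consequently the case analysis cannot be closed as sketched.

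Let $v$ carry $n-4\ge3$ pendant leaves and one pendant path $v\,a\,b\,c$. This graph is not a firefly, yet none of your detectors fires:
\begin{itemize}
\item every connected $4$-set avoiding $v$ would have to lie inside $\{a,b,c\}$;
\item the longest path has five vertices;
\item $d_2=2$.
\end{itemize}
This is exactly your $|C|=3$ case, where you assert that every subcase yields a forbidden configuration. The $7$-vertex spiders with legs $(4,1,1)$ and $(3,2,1)$, and $C_4$ with three leaves at one vertex, escape your list in the same way.

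Edge deletion only gives $\mu_2\ge3$ for these graphs (e.g.\ from $K_{1,n-4}\cup P_3$), and strictness is genuinely delicate. On $6$ vertices, $v$ with two leaves and a path $v\,a\,b\,c$ is a non-firefly with $\mu_2=3$ exactly; the eigenvector is $(1,1,-2,1,-\tfrac12,-\tfrac12)$ on $(v,a,b,c,\ell_1,\ell_2)$. That is precisely why $n\ge7$ is needed.

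What is missing is a strict bound for a $(4,3)$ split. Suppose $H_1$ is connected with $|H_1|\ge4$, and $H_2\in\{P_3,K_3\}$ is vertex-disjoint from $H_1$ and joined to it by an edge $uw$ with $u\in H_1$, $w\in H_2$. Take a top eigenvector $x$ of $L(H_1)$ and an eigenvector $y$ of $L(H_2)$ for the eigenvalue $3$ with $y_w\ne0$ (e.g.\ $(1,-2,1)$ on $P_3$). For $0\ne z=\alpha x+\beta y$,
\[
z^{\top}L(G)z\ \ge\ 3\|z\|^2+\big(\mu_1(H_1)-3\big)\alpha^2\|x\|^2+(\alpha x_u-\beta y_w)^2\ >\ 3\|z\|^2 .
\]
Hence $\mu_2(G)>3$ by Courant--Fischer.

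With this lemma the combinatorics becomes short:
\begin{itemize}
\item A tree on $n\ge7$ vertices in which every edge has a side of at most two vertices is a firefly (look at a longest path).
\item So a non-firefly tree has an edge splitting it into connected sides of orders $\ge3$ and $\ge4$, and either your two-$4$-sets tool or the lemma (with $H_2=P_3$) applies.
\item A connected non-firefly graph has a non-firefly spanning tree. Start from a firefly spanning tree with center $v$, and replace one or two tree edges at $v$ by edges inside a component of $G-v$ with at least three vertices.
\end{itemize}
Without some such strict-inequality device, your forbidden-configuration list leaves infinitely many non-fireflies unaccounted for.
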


The only Helmholtzian-specific fact we need is that the triangles of a firefly contribute exactly the eigenvalue $3$.

\begin{lem}\label{lem:firefly-up}
Let $F=F_{s,t,k}$ with center $v$. The triangles of $F$ are the $s$ blades through $v$, and they are pairwise edge-disjoint. Consequently $\Spec^{+}\!\big(\Lup(F)\big)=\{3^{(s)}\}$ and $b_1(F)=0$, so that
\[
\Spec\big(L_1(F)\big)=\big\{\mu_1(F),\dots,\mu_{n-1}(F)\big\}\ \uplus\ \{3^{(s)}\}.
\]
\end{lem}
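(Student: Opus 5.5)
We first identify the triangles of $F=F_{s,t,k}$ directly from Definition~\ref{def:firefly}. Every edge of $F$ either contains the center $v$ or lies inside one attached piece: it is the edge $a_ib_i$ of a blade, or the outer edge $c_jd_j$ of a pendant path. Now take a triangle $\{x,y,z\}$ of $F$. If it avoided $v$, all three of its edges would be non-central. But any two non-central edges lie in different pieces and so are vertex-disjoint; they cannot share a vertex, which is impossible. Hence every triangle contains $v$, say $\{v,y,z\}$. Then $yz$ is a non-central edge with both endpoints adjacent to $v$, and this forces $yz=a_ib_i$ for some blade: the edge $c_jd_j$ fails because $d_j$ is not adjacent to $v$. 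So the triangles are exactly the $s$ blades $\{v,a_i,b_i\}$. Two distinct blades meet only in $v$, so they share no edge.

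Next we compute $\Spec^{+}(\Lup(F))$. We use the representation $\Lup(F)=\sum_{S\in T(F)}v_Sv_S^{\top}$ from Section~\ref{sec:def}. The vectors $v_S$ for distinct blades have disjoint supports, since the blades are edge-disjoint, so they are pairwise orthogonal. Each satisfies $\|v_S\|^2=3$, having three entries equal to $\pm1$. Therefore $\Lup(F)=\sum_S 3\,\hat v_S\hat v_S^{\top}$ with orthonormal unit vectors $\hat v_S$, and the nonzero spectrum is $\{3^{(s)}\}$. Equivalently, $\partial_2^{\top}\partial_2=3I_s$. In particular $\rank\partial_2=s$.

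For the Betti number, $F$ has $n=2s+2t+k+1$ vertices. It has $m=3s+2t+k$ edges: three per blade, two per path, and one per pendant edge. Hence $m-(n-1)=s$, and so $b_1=m-(n-1)-\rank\partial_2=0$. Since $F$ is connected, Proposition~\ref{prop:spec-decomp} applies and gives the displayed spectrum.

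None of these steps is a real obstacle. The only point needing care is the case analysis showing that no triangle avoids $v$ or uses a pendant-path edge, and that rests on the fact that the pieces are vertex-disjoint away from $v$.
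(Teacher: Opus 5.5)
Your proposal is correct and follows essentially the same route as the paper: you show the blades are the only triangles and are pairwise edge-disjoint, deduce $\partial_2^{\top}\partial_2=3I_s$ (which you phrase as an orthogonal rank-one decomposition of $\Lup$) and hence $\rank\partial_2=s$, count $m-(n-1)=s$ to get $b_1=0$, and then invoke Proposition~\ref{prop:spec-decomp} using that $F$ is connected. Your case analysis identifying the triangles is more explicit than the paper's one-line neighborhood argument, but its content is the same.
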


\begin{proof}
Every vertex other than $v$ has all of its neighbors inside $\{v\}$ together with its own attached piece, and only the blades carry an edge opposite to $v$; hence a set of three pairwise adjacent vertices must be a blade $\{v,a_i,b_i\}$, and there are exactly $s$ of them. Distinct blades meet only at $v$, so they share no edge and span $3s$ distinct edges. Writing $\partial_2$ for the edge--triangle boundary, each blade boundary $v_S=\partial_2 S$ has $\|v_S\|^2=3$, and edge-disjoint blades give $v_S^{\top}v_{S'}=0$ for $S\ne S'$; thus $\partial_2^{\top}\partial_2=3I_s$. The nonzero eigenvalues of $\Lup=\partial_2\partial_2^{\top}$ are those of $\partial_2^{\top}\partial_2$, namely $3$ with multiplicity $s$. Finally $\rank\partial_2=s$, while $F$ has $m=3s+2t+k$ edges and $n-1=2s+2t+k$, so $b_1=m-(n-1)-\rank\partial_2=0$. The displayed spectrum is now Proposition~\ref{prop:spec-decomp}.
\end{proof}

\begin{proof}[Proof of Theorem~\ref{thm:lambda2}]
Suppose first that $\lambda_2(L_1)\le 3$. By Proposition~\ref{prop:spec-decomp} we have $\lambda_2(L_1)\ge\mu_2(G)$, so $\mu_2(G)\le 3$, and Theorem~\ref{thm:lgs} forces $G$ to be a firefly graph.

Conversely, let $G=F_{s,t,n-2s-2t-1}$ be a firefly of order $n\ge 7$. In a firefly the center is the unique vertex of maximum degree, and the configurations with $n\ge 7$ all have $\deg(v)\ge 3$; hence $\mu_1\ge\deg(v)+1\ge 4>3$ by the bound of Grone, Merris and Sunder~\cite{GMS}, so $\mu_1$ is the largest element of $\Spec(L_1)$. By Lemma~\ref{lem:firefly-up},
\[
\Spec(L_1)=\{\mu_1,\dots,\mu_{n-1}\}\ \uplus\ \{3^{(s)}\},
\]
and removing one copy of $\mu_1$ leaves second largest element $\max\{\mu_2,\,3\cdot[s\ge 1]\}$. If $s=0$, then $F$ is triangle-free, the value $3$ is absent, and $\lambda_2(L_1)=\mu_2(G)$; Theorem~\ref{thm:lgs} then gives the first three cases. If $s\ge 1$, then $\mu_2(G)\le 3$ because $F$ is a firefly, so $\lambda_2(L_1)=\max\{\mu_2,3\}=3$, the fourth case. Since $1<\tfrac{3+\sqrt5}{2}-\tfrac1n<\tfrac{3+\sqrt5}{2}<3$ for $n\ge 7$, the four values are distinct, and the stated equivalences follow.
\end{proof}

\begin{rem}\label{rem:intrinsic-three}
The proof isolates the role of the triangles: the boundary value $3$ in Theorem~\ref{thm:lambda2} is the up Laplacian eigenvalue $\|\partial_2 S\|^2=3$ of a single triangle $S$, surfacing through~\eqref{eq:spec-decomp} once $\mu_2\le 3$ has fixed the Laplacian part. The mechanism is not special to fireflies. For any connected graph $G$ whose triangles are pairwise edge-disjoint one has $\Spec^{+}(\Lup)=\{3^{(t_\triangle(G))}\}$, where $t_\triangle(G)$ is the number of triangles, and therefore
\[
\lambda_2(L_1)=\max\{\mu_2(G),\,3\cdot[\,t_\triangle(G)\ge 1\,]\}.
\]
Indeed, if $G$ has a triangle then $\Delta(G)\ge2$, so $\mu_1\ge\Delta(G)+1\ge3$ and $\mu_1$ is again the largest element of $\Spec(L_1)$; if $G$ has no triangle the displayed value is $\mu_2$, as $\Lup=0$.
Among connected graphs of order $n\ge 7$ the constraint $\mu_2\le 3$ already forces the firefly structure, which is why no graphs beyond fireflies appear; but the displayed identity holds verbatim for the edge-disjoint-triangle regime in general.
\end{rem}

\section{Higher Dimensions}\label{sec:general}

The argument of Section~\ref{sec:main} is not particular to graphs. In this section we carry it out in every dimension and prove the three results announced in the introduction: the general bound (Theorem~\ref{thm:maink-intro}, proved here as Theorem~\ref{thm:maink}), which recovers Theorem~\ref{thm:main} when $k=1$; the unconditional case of Theorem~\ref{thm:tf-k-intro}, proved as Theorem~\ref{thm:tf-k}; and the shifted case of Theorem~\ref{thm:shifted-intro}, proved as Theorem~\ref{thm:shifted}.

Recall that an \emph{abstract simplicial complex} on $[n]$ is a family of subsets of $[n]$ closed under inclusion; a member of size $j+1$ is a \emph{$j$-face}. The clique complex of Sections~\ref{sec:def}--\ref{sec:open} is the instance whose faces are the vertex sets of the cliques of a graph, and its faces of dimension at most two carry the Helmholtzian. We now allow faces of every dimension; the boundary operators, chain spaces, and Laplacians below are those of the general combinatorial Hodge theory of Horak and Jost~\cite{HJ}.

Fix an integer $k\ge1$. We work in the full simplex on $[n]$, writing $C_j=\mathbb{R}^{\binom{[n]}{j+1}}$ for its space of $j$-chains and $\partial_j\colon C_j\to C_{j-1}$ for the boundary maps, so that $L_j^{\mathrm{up}}=\partial_{j+1}\partial_{j+1}^{\top}$ and $L_j^{\mathrm{down}}=\partial_j^{\top}\partial_j$ as in Section~\ref{sec:def}. We use the augmented convention throughout: $C_{-1}=\mathbb{R}^{\binom{[n]}{0}}=\mathbb{R}$ and $\partial_0=\mathbf 1^{\top}$, so that $Z_0=\ker\partial_0=\mathbf 1^{\perp}$ and $L_0^{\mathrm{up}}(G)=L(G)$. This is what makes the case $k=1$ below specialize to Section~\ref{sec:main}: without the augmentation one would have $Z_0=C_0$ and $\lambda_{\min}(L(\Gbar)|_{Z_0})=0$ rather than $a(\Gbar)$. For a simplicial complex $G$ on $[n]$, its up Laplacian in dimension $k$ is $L_k^{\mathrm{up}}(G)=\sum_{F}v_Fv_F^{\top}$, summed over the $(k+1)$-faces $F$ of $G$, where $v_F=\partial_{k+1}F\in C_k$; as in Section~\ref{sec:main} this acts on $C_k$ and annihilates $Z_k^{\perp}$, where $Z_k=\ker\partial_k$. Write
\[
\nu_{k-1}(G)=\lambda_{\max}\big(L_{k-1}^{\mathrm{up}}(G)\big),
\]
so that for $k=1$, $\nu_0(G)=\lambda_{\max}(L(G))=\mu_1(G)$ is the largest Laplacian eigenvalue of the introduction.

Each $v_F$ lies in $Z_k$, and on the full simplex $\big(\sum_{F\in\binom{[n]}{k+2}}v_Fv_F^{\top}\big)\big|_{Z_k}=nI$ (see~\cite{HJ}). Writing $\bar L_k(G)$ for the sum of $v_Fv_F^{\top}$ over the \emph{missing} $(k+1)$-faces of $G$, we obtain on $Z_k$ the identity $L_k^{\mathrm{up}}(G)|_{Z_k}+\bar L_k(G)|_{Z_k}=nI$, exactly as in~\eqref{eq:complement}. The same identity one dimension lower, applied to the $k$-faces, reads $L_{k-1}^{\mathrm{up}}(G)|_{Z_{k-1}}+L_{k-1}^{\mathrm{up}}(\bar G)|_{Z_{k-1}}=nI$, where $L_{k-1}^{\mathrm{up}}(\bar G)=\sum_{\tau}v_{\tau}v_{\tau}^{\top}$ is summed over the missing $k$-faces $\tau$, with $v_{\tau}=\partial_k\tau$. We therefore set
\[
a_k(\bar G):=\lambda_{\min}\big(L_{k-1}^{\mathrm{up}}(\bar G)\big|_{Z_{k-1}}\big)=n-\nu_{k-1}(G),
\]
the codimension-one connectivity of the complement, which generalizes $a(\Gbar)=n-\mu_1(G)$.

For a $k$-face $\tau$ (a $(k+1)$-subset of $[n]$) and a vertex $w\notin\tau$, let $\crc_{\tau,w}(x)=\langle\partial_{k+1}(\tau\cup w),\,x\rangle$. As in Section~\ref{sec:def}, $\crc_{\tau,w}(x)=\varepsilon_w x_{\tau}+D_{\tau,w}(x)$, where $\varepsilon_w\in\{\pm1\}$ is the sign of the facet $\tau$ in $\partial_{k+1}(\tau\cup w)$ and
\[
D_{\tau,w}(x)=\sum_{u\in\tau}\varepsilon_u\,x_{(\tau\setminus u)\cup w}
\]
collects the remaining $k+1$ facets. The following is the higher-dimensional form of Lemma~\ref{lem:identity}.

\begin{lem}\label{lem:identityk}
Let $x\in Z_k$ and let $\tau$ be a $(k+1)$-subset of $[n]$. Then
\[
\sum_{w\notin\tau}\crc_{\tau,w}(x)^2=(n+k+1)\,x_{\tau}^2+\sum_{w\notin\tau}D_{\tau,w}(x)^2 .
\]
\end{lem}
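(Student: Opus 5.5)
The plan is to follow the proof of Lemma~\ref{lem:identity} line by line. Write $\crc_{\tau,w}(x)=\varepsilon_w x_\tau+D_{\tau,w}(x)$, square it, and sum over the $n-k-1$ vertices $w\notin\tau$. Since $\varepsilon_w^2=1$, this gives
\[
\sum_{w\notin\tau}\crc_{\tau,w}(x)^2=(n-k-1)\,x_\tau^2+2x_\tau\sum_{w\notin\tau}\varepsilon_w D_{\tau,w}(x)+\sum_{w\notin\tau}D_{\tau,w}(x)^2 .
\]
The claim then reduces to the single identity
\[
\sum_{w\notin\tau}\varepsilon_w D_{\tau,w}(x)=(k+1)\,x_\tau ,
\]
because then $(n-k-1)+2(k+1)=n+k+1$. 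This identity is the analogue of the step $\sum_w(x_{qw}-x_{pw})=2x_{pq}$ in dimension one, where $k+1=2$.

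To prove the identity, expand $D_{\tau,w}$ and swap the sums:
\[
\sum_w\varepsilon_w D_{\tau,w}=\sum_{u\in\tau}\ \sum_{w\notin\tau}\varepsilon_w\varepsilon_u\,x_{(\tau\setminus u)\cup w}.
\]
It then suffices to show that each inner sum equals $x_\tau$. Fix $u\in\tau$ and set $\sigma=\tau\setminus u$, a $(k-1)$-face. Write $[\rho:\sigma]$ for the incidence sign of $\sigma$ in $\partial_k\rho$. The $k$-subsets containing $\sigma$ are $\tau$ itself and the sets $\sigma\cup w$ with $w\notin\tau$. So the condition $x\in Z_k=\ker\partial_k$, read at the coordinate $\sigma$, says
\[
[\tau:\sigma]\,x_\tau+\sum_{w\notin\tau}[\sigma\cup w:\sigma]\,x_{\sigma\cup w}=0 .
\]

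The only delicate point is the sign bookkeeping, and the plan is to avoid explicit sign formulas by using $\partial_k\partial_{k+1}=0$. Apply it to the $(k+1)$-face $\tau\cup w$ and read off the coefficient of $\sigma$. Exactly two facets of $\tau\cup w$ contain $\sigma$: namely $\tau$, with sign $\varepsilon_w$, and $\sigma\cup w$, with sign $\varepsilon_u$. Their contributions must cancel, so
\[
\varepsilon_w[\tau:\sigma]=-\varepsilon_u[\sigma\cup w:\sigma].
\]
Multiply the cycle condition by $[\tau:\sigma]$, which is $\pm1$, and substitute $[\tau:\sigma][\sigma\cup w:\sigma]=-\varepsilon_w\varepsilon_u$. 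This gives
\[
x_\tau=\sum_{w\notin\tau}\varepsilon_w\varepsilon_u\,x_{\sigma\cup w},
\]
which is the required inner sum. Summing over the $k+1$ choices of $u\in\tau$ yields $(k+1)x_\tau$, and the lemma follows.

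I expect the main obstacle to be this sign consistency, namely checking that the facet $(\tau\setminus u)\cup w$ carries the sign $\varepsilon_u$ in $\partial_{k+1}(\tau\cup w)$, as the definition of $D_{\tau,w}$ presupposes. The $\partial\partial=0$ device handles this uniformly, with no case analysis on the relative order of $w$ and the elements of $\tau$.
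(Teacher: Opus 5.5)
Your proof is correct and follows the paper's argument. You expand the square, reduce everything to $\sum_{w\notin\tau}\varepsilon_w D_{\tau,w}(x)=(k+1)x_\tau$, and obtain this by reading the cycle condition $\partial_k x=0$ at each $(k-1)$-face $\tau\setminus u$ and summing over $u\in\tau$. The only difference is that you derive the sign relation $[\tau:\sigma][\sigma\cup w:\sigma]=-\varepsilon_w\varepsilon_u$ from $\partial_k\partial_{k+1}(\tau\cup w)=0$, whereas the paper checks the equivalent identity by counting positions in $\tau$ and $\tau\cup w$; your device avoids tracking the relative order of $u$ and $w$.
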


\begin{proof}
Since $\varepsilon_w^2=1$, squaring and summing the $n-k-1$ terms gives
\[
\sum_{w\notin\tau}\crc_{\tau,w}(x)^2=(n-k-1)x_{\tau}^2+2x_{\tau}\sum_{w\notin\tau}\varepsilon_wD_{\tau,w}(x)+\sum_{w\notin\tau}D_{\tau,w}(x)^2 .
\]
For the middle term, write $\varepsilon(v,g)$ for the sign of inserting a vertex $v$ into a face $g$, so that $(\partial_kx)_g=\sum_{v\notin g}\varepsilon(v,g)\,x_{g\cup v}$ for every $k$-subset $g$, and let $i(u)$ denote the position of $u$ in the increasing order of $\tau$. Fix $u\in\tau$ and put $g_u=\tau\setminus u$. Comparing positions inside $\tau\cup w$ and inside $\tau$ gives
\[
\varepsilon(u,g_u)=(-1)^{i(u)},\qquad
\varepsilon_w\,\varepsilon\big(u,(\tau\setminus u)\cup w\big)=(-1)^{i(u)+1}\varepsilon(w,g_u),
\]
the second identity being independent of $w$: the position of $u$ in $\tau\cup w$ exceeds $i(u)$ by $[\,w<u\,]$, and the correction from deleting $u$ contributes $[\,u<w\,]$, so the two shifts always sum to~$1$. Now $g_u$ is a $(k-1)$-face whose completing vertices are $u$ and the vertices $w\notin\tau$, so $\partial_kx=0$ at $g_u$ reads $\varepsilon(u,g_u)x_{\tau}+\sum_{w\notin\tau}\varepsilon(w,g_u)x_{g_u\cup w}=0$. Multiplying by $(-1)^{i(u)+1}$ turns this into $\sum_{w\notin\tau}\varepsilon_w\varepsilon(u,(\tau\setminus u)\cup w)x_{(\tau\setminus u)\cup w}=x_{\tau}$, and summing the $k+1$ resulting identities over $u\in\tau$ gives $\sum_{w\notin\tau}\varepsilon_wD_{\tau,w}(x)=(k+1)x_{\tau}$. The middle term therefore equals $2(k+1)x_{\tau}^2$. Adding $(n-k-1)x_{\tau}^2+2(k+1)x_{\tau}^2=(n+k+1)x_{\tau}^2$ completes the proof.
\end{proof}

\begin{thm}\label{thm:maink}
Let $k\ge1$ and let $G$ be a simplicial complex on $[n]$. Then
\[
\lambda_{\max}\big(L_k^{\mathrm{up}}(G)\big)\ \le\ \nu_{k-1}(G)+\frac{1}{k+2}\big(n-\nu_{k-1}(G)\big).
\]
\end{thm}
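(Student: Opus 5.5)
We follow the proof of Theorem~\ref{thm:main} one dimension up. By the identity $L_k^{\mathrm{up}}(G)|_{Z_k}+\bar L_k(G)|_{Z_k}=nI$, and since $L_k^{\mathrm{up}}(G)$ annihilates $Z_k^{\perp}$, we have $\lambda_{\max}(L_k^{\mathrm{up}}(G))=n-\lambda_{\min}(\bar L_k(G)|_{Z_k})$. With $a_k(\bar G)=n-\nu_{k-1}(G)$, the claim becomes
\[
\lambda_{\min}\big(\bar L_k(G)|_{Z_k}\big)\ \ge\ \tfrac{k+1}{k+2}\,a_k(\bar G).
\]
To prove it, introduce
\[
Q_k=\sum_{\tau}\sum_{w\notin\tau}v_{\tau\cup w}v_{\tau\cup w}^{\top},
\]
where $\tau$ runs over the missing $k$-faces of $G$. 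Every $(k+2)$-set containing a missing $k$-face is itself a missing $(k+1)$-face. Hence $Q_k=\sum_F\omega(F)v_Fv_F^{\top}$, where $\omega(F)\le k+2$ counts the missing facets of $F$. Consequently $\bar L_k(G)\succeq\frac1{k+2}Q_k$, and it suffices to show
\[
x^{\top}Q_kx\ \ge\ (k+1)\,a_k(\bar G)\,\|x\|^2\qquad\text{for } x\in Z_k .
\]
This is the analogue of Lemma~\ref{lem:Qbound}.

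For this bound, apply Lemma~\ref{lem:identityk} to each missing $\tau$:
\[
x^{\top}Q_kx=\sum_\tau\Big[(n+k+1)x_\tau^2+\sum_{w\notin\tau}D_{\tau,w}(x)^2\Big].
\]
To control $\sum_\tau\sum_w D_{\tau,w}(x)^2$, fix a vertex $w$ and form the link chain $g^w\in C_{k-1}$ by $g^w_\sigma=\pm x_{\sigma\cup w}$ for $w\notin\sigma$ and $g^w_\sigma=0$ for $w\in\sigma$. The sign is the insertion sign $\varepsilon(w,\sigma)$, chosen so that the following two facts hold.

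First, $g^w\in Z_{k-1}$. This should follow from $\partial_kx=0$ evaluated at the $(k-1)$-faces containing $w$, a cone-type computation playing the role of~\eqref{eq:divfree}. Consequently $(g^w)^{\top}L_{k-1}^{\mathrm{up}}(\bar G)g^w\ge a_k(\bar G)\|g^w\|^2$.

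Second, for a missing $\tau$ with $w\notin\tau$, we have $\langle v_\tau,g^w\rangle=\pm D_{\tau,w}(x)$. For $w\in\tau$, the quantity $\langle v_\tau,g^w\rangle$ reduces to the single term $\pm x_\tau$, because only the facet $\tau\setminus w$ avoids $w$.

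Splitting the quadratic form by whether $w\in\tau$ then gives
\[
\sum_{\tau\not\ni w}D_{\tau,w}^2\ \ge\ a_k(\bar G)\|g^w\|^2-\sum_{\tau\ni w}x_\tau^2 .
\]
Summing over $w$ uses two counts. Each $k$-face of $x$ contains $k+1$ vertices, so $\sum_w\|g^w\|^2=(k+1)\|x\|^2$. Likewise, the correction sums to $(k+1)\sum_\tau x_\tau^2$. Altogether,
\[
x^{\top}Q_kx\ \ge\ n\sum_\tau x_\tau^2+(k+1)\,a_k(\bar G)\|x\|^2\ \ge\ (k+1)\,a_k(\bar G)\|x\|^2 .
\]
Combining this with $\bar L_k\succeq\frac1{k+2}Q_k$ yields $\lambda_{\max}(L_k^{\mathrm{up}})\le n-\frac{k+1}{k+2}(n-\nu_{k-1})$, which is exactly the stated bound.

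The main obstacle is the sign bookkeeping for $g^w$. We must choose $\varepsilon(w,\sigma)$ so that $g^w$ is genuinely a cycle, $\partial_{k-1}g^w=0$, and simultaneously so that its pairing with $\partial_k\tau$ reproduces $D_{\tau,w}$ up to a global sign. We will attempt this with $g^w_\sigma=\varepsilon(w,\sigma)x_{\sigma\cup w}$, verifying the needed relations via the position-shift argument used in Lemma~\ref{lem:identityk}. The fact that makes this work is $\partial\partial=0$ applied to the cone over $w$: for the full simplex, $x=\partial(\text{cone})$-type relations make contraction with $w$ anticommute with $\partial$. If this direct sign check becomes messy, the fallback is to define $g^w=\iota_w x$, the interior product or contraction with $w$. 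This operator satisfies $\partial\iota_w+\iota_w\partial=\pm\mathrm{id}$ restricted appropriately (the augmented convention being essential at $k=1$), and that identity would give both properties at once.
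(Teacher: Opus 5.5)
Your proposal is correct and is essentially the paper's proof step for step. It uses the same reduction to $\lambda_{\min}(\bar L_k|_{Z_k})$, the same overcounting $\bar L_k\succeq\frac1{k+2}Q$, the same contraction $h^w_g=\varepsilon(w,g)\,x_{g\cup w}$ and the same double count, and the sign checks you defer do go through: $\partial_{k-1}h^w=-\iota_w\partial_k x$, and $\langle\partial_k\tau,h^w\rangle=-\varepsilon_w D_{\tau,w}(x)$ for $w\notin\tau$. The only slip is in your fallback remark: contraction anticommutes with the boundary, i.e.\ $\partial\iota_w+\iota_w\partial=0$ (as you correctly say just before), whereas an identity with $\mathrm{id}$ on the right is the homotopy formula for the cone operator, the adjoint of $\iota_w$, and as written it would not give $\partial_{k-1}h^w=0$.
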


\begin{proof}
As in Proposition~\ref{prop:cycle}, $\lambda_{\max}(L_k^{\mathrm{up}}(G))=n-\lambda_{\min}(\bar L_k(G)|_{Z_k})$, so it suffices to prove that $x^{\top}\bar L_k(G)\,x\ge\frac{k+1}{k+2}\,a_k(\bar G)$ for every $x\in Z_k$ with $\|x\|=1$.

\smallskip
\noindent\emph{Step 1 (overcounting).}
Each missing $(k+1)$-face $\sigma$ has $\omega(\sigma)\in\{0,1,\dots,k+2\}$ facets that are missing $k$-faces; for a general complex the value $0$ can occur, when every facet of $\sigma$ is present (a hollow simplex), and such $\sigma$ contribute to $\bar L_k(G)$ but not to $Q$, so they only strengthen the comparison below. Since $\omega(\sigma)\le k+2$,
\[
\bar L_k(G)\ \succeq\ \frac1{k+2}\sum_{\sigma}\omega(\sigma)\,v_{\sigma}v_{\sigma}^{\top}=\frac1{k+2}\,Q,\qquad
Q:=\sum_{\tau}\ \sum_{w\notin\tau}v_{\tau\cup w}v_{\tau\cup w}^{\top},
\]
the inner sum running over the missing $k$-faces $\tau$. By Lemma~\ref{lem:identityk},
\[
x^{\top}Qx=\sum_{\tau}\Big[(n+k+1)x_{\tau}^2+\sum_{w\notin\tau}D_{\tau,w}(x)^2\Big].
\]

\smallskip
\noindent\emph{Step 2 (the complement's connectivity, via signed contraction).}
For $w\in[n]$ define $h^{w}\in C_{k-1}$ by $h^{w}_{g}=\varepsilon(w,g)\,x_{g\cup w}$ for $g\not\ni w$ and $h^{w}_{g}=0$ otherwise, where $\varepsilon(w,g)$ is the sign of inserting $w$ into $g$. Then $\partial_{k-1}h^{w}=0$ follows from $\partial_kx=0$, so $h^{w}\in Z_{k-1}$. Moreover $\langle\partial_k\tau,\,h^{w}\rangle=-\varepsilon_w\,D_{\tau,w}(x)$ when $w\notin\tau$, while for $w\in\tau$ the same pairing equals $x_{\tau}$; only the squares of these quantities enter below. Hence, summing over the missing $k$-faces $\tau$,
\[
\sum_{w\in[n]}(h^{w})^{\top}L_{k-1}^{\mathrm{up}}(\bar G)\,h^{w}=\sum_{\tau}\Big[\sum_{w\notin\tau}D_{\tau,w}(x)^2+(k+1)x_{\tau}^2\Big].
\]
Since each $h^{w}\in Z_{k-1}$, the left side is at least $a_k(\bar G)\sum_w\|h^{w}\|^2$, and $\sum_w\|h^{w}\|^2=(k+1)\|x\|^2=k+1$, because every $(k+1)$-subset arises as $g\cup w$ in $k+1$ ways. Therefore
\[
\sum_{\tau}\sum_{w\notin\tau}D_{\tau,w}(x)^2\ \ge\ (k+1)\,a_k(\bar G)-(k+1)\sum_{\tau}x_{\tau}^2 .
\]

\smallskip
\noindent\emph{Step 3 (combine).}
Substituting into the expression for $x^{\top}Qx$,
\[
x^{\top}Qx\ \ge\ (n+k+1)\sum_{\tau}x_{\tau}^2+(k+1)a_k(\bar G)-(k+1)\sum_{\tau}x_{\tau}^2
=n\sum_{\tau}x_{\tau}^2+(k+1)a_k(\bar G),
\]
which is at least $(k+1)a_k(\bar G)$.
Hence $x^{\top}\bar L_k(G)x\ge\frac1{k+2}x^{\top}Qx\ge\frac{k+1}{k+2}a_k(\bar G)$, and
\[
\lambda_{\max}(L_k^{\mathrm{up}})=n-\lambda_{\min}(\bar L_k|_{Z_k})\le n-\tfrac{k+1}{k+2}a_k(\bar G)=\nu_{k-1}+\tfrac1{k+2}\big(n-\nu_{k-1}\big).\qedhere
\]
\end{proof}

When $k=1$, Theorem~\ref{thm:maink} reduces to Theorem~\ref{thm:main}, with $a_1(\bar G)=n-\mu_1(G)=a(\Gbar)$ the algebraic connectivity of the complement graph. The divergence-free identity and the bound thus extend to every dimension, with $\tfrac13$ replaced by $\tfrac1{k+2}$.

The complement condition of Theorem~\ref{thm:triangle-free} also has a counterpart in every dimension. The proof of Theorem~\ref{thm:maink} shows that $x^{\top}Qx\ge(k+1)\,a_k(\bar G)\|x\|^2$ for $x\in Z_k$, and the only place the loss enters is the comparison $\bar L_k(G)\succeq\frac1{k+2}Q$, which weakens to an equality-friendly $\bar L_k(G)\succeq\frac1{k+1}Q$ once no missing face is full.

\begin{thm}\label{thm:tf-k}
Let $k\ge1$ and let $G$ be a simplicial complex on $[n]$ in which every missing $(k+1)$-face has at most $k+1$ missing facets. Then
\[
\lambda_{\max}\big(L_k^{\mathrm{up}}(G)\big)\ \le\ \nu_{k-1}(G);
\]
that is, Conjecture~\ref{conj:general} holds in dimension $k$. This is Theorem~\ref{thm:tf-k-intro} of the introduction; for $k=1$ it is Theorem~\ref{thm:triangle-free}.
\end{thm}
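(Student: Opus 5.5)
The plan is to rerun the proof of Theorem~\ref{thm:maink}, changing only the overcounting constant in Step~1. The starting point is the localization used there, \[ \lambda_{\max}\big(L_k^{\mathrm{up}}(G)\big)=n-\lambda_{\min}\big(\bar L_k(G)|_{Z_k}\big), \] together with $a_k(\bar G)=n-\nu_{k-1}(G)$. With these, the claim $\nu_k(G)\le\nu_{k-1}(G)$ is equivalent to \[ x^{\top}\bar L_k(G)\,x\ \ge\ a_k(\bar G)\qquad\text{for every }x\in Z_k\text{ with }\|x\|=1 . \]

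The comparison with $Q$ is where the hypothesis enters. Every missing $(k+1)$-face $\sigma$ now has $\omega(\sigma)\le k+1$ missing facets; faces with $\omega(\sigma)=0$ appear in $\bar L_k(G)$ but not in $Q$. Each summand $v_\sigma v_\sigma^{\top}$ is positive semidefinite, so \[ \bar L_k(G)=\sum_\sigma v_\sigma v_\sigma^{\top}\ \succeq\ \frac1{k+1}\sum_\sigma \omega(\sigma)\,v_\sigma v_\sigma^{\top}=\frac1{k+1}\,Q . \] This step uses the same double count as Step~1 of Theorem~\ref{thm:maink}: each $\sigma$ appears in $Q$ once for each missing facet $\tau$, via $\sigma=\tau\cup w$.

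Next I would invoke Steps~2 and~3 of that proof verbatim; they never use the constant $k+2$. They give \[ x^{\top}Qx\ \ge\ n\sum_{\tau}x_\tau^2+(k+1)\,a_k(\bar G)\ \ge\ (k+1)\,a_k(\bar G) \] for unit $x\in Z_k$. Dividing by $k+1$ yields $x^{\top}\bar L_k(G)x\ge a_k(\bar G)$, hence \[ \lambda_{\max}\big(L_k^{\mathrm{up}}(G)\big)\le n-a_k(\bar G)=\nu_{k-1}(G). \] For $k=1$ the hypothesis reads ``no $3$-set has all three pairs missing'', which is exactly Theorem~\ref{thm:triangle-free}.

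I do not expect a real obstacle, since all the substance sits in Lemma~\ref{lem:identityk} and the signed-contraction Step~2. The only points to check are two. First, that the identity $a_k(\bar G)=n-\nu_{k-1}(G)$ holds in the augmented convention: it needs $L_{k-1}^{\mathrm{up}}(G)$ to attain its maximum on $Z_{k-1}$, which is true because each $v_\tau$ lies in $Z_{k-1}$. Second, that the weight bound $\omega(\sigma)\le k+1$ is used for every $\sigma$ that contributes to $Q$, which is precisely what the hypothesis supplies.
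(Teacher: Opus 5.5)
Your proposal is correct and is essentially the paper's own proof. The hypothesis $\omega(\sigma)\le k+1$ sharpens the comparison of Step~1 to $\bar L_k(G)\succeq\frac1{k+1}Q$, and Steps~2--3 of Theorem~\ref{thm:maink}, which indeed never use the constant $k+2$, give $x^{\top}Qx\ge(k+1)\,a_k(\bar G)\|x\|^2$ on $Z_k$; together these yield $\lambda_{\min}(\bar L_k|_{Z_k})\ge a_k(\bar G)=n-\nu_{k-1}(G)$, exactly as in the paper.
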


\begin{proof}
By hypothesis $\omega(\sigma)\le k+1$ for every missing $(k+1)$-face $\sigma$, so
\[
\bar L_k(G)=\sum_{\sigma}v_{\sigma}v_{\sigma}^{\top}\ \succeq\ \frac1{k+1}\sum_{\sigma}\omega(\sigma)\,v_{\sigma}v_{\sigma}^{\top}=\frac1{k+1}\,Q.
\]
The proof of Theorem~\ref{thm:maink} gives $x^{\top}Qx\ge(k+1)\,a_k(\bar G)\|x\|^2$ for $x\in Z_k$, so $x^{\top}\bar L_k(G)x\ge a_k(\bar G)\|x\|^2$, that is $\lambda_{\min}(\bar L_k|_{Z_k})\ge a_k(\bar G)$. Hence $\lambda_{\max}(L_k^{\mathrm{up}})=n-\lambda_{\min}(\bar L_k|_{Z_k})\le n-a_k(\bar G)=\nu_{k-1}$. For $k=1$ the hypothesis $\omega(S)\le 2$ is exactly that $\Gbar$ is triangle-free.
\end{proof}

For shifted complexes---the higher-dimensional analogue of threshold graphs---Conjecture~\ref{conj:general} can be settled in full, and the largest up Laplacian eigenvalue takes an explicit combinatorial value in every dimension. Recall that a simplicial complex $G$ on $[n]$ is \emph{shifted} if, for some ordering of the vertices, replacing any vertex of a face by a smaller one always yields a face; a graph is shifted exactly when it is a threshold graph. For $j\ge1$ and a vertex $v$, let $\deg_j(v)$ be the number of $j$-faces containing $v$, and set
\[
V_j(G)=\big|\{v\in[n]:\deg_j(v)\ge1\}\big|,
\]
the number of vertices lying in some $j$-face. Duval and Reiner determined the up Laplacian spectra of shifted complexes through these degrees: for a shifted complex, the nonzero spectrum of $L_k^{\mathrm{up}}$ is the conjugate partition of the degree sequence $\big(\deg_{k+1}(v)\big)_{v\in[n]}$, by~\cite[Theorem~1.1]{DR} applied to the (shifted) family of $(k+1)$-faces. As the largest part of the conjugate of a partition is its number of nonzero entries, this reads
\begin{equation}\label{eq:shifted-max}
\lambda_{\max}\big(L_k^{\mathrm{up}}(G)\big)=V_{k+1}(G)\qquad(G\text{ shifted},\ k\ge0).
\end{equation}
For $k=0$ this is $\lambda_{\max}(L(G))=V_1(G)$, the number of non-isolated vertices, which equals $n$ for a connected threshold graph and recovers $\mu_1=n$.

\begin{thm}\label{thm:shifted}
Conjecture~\ref{conj:general} holds for every shifted complex: for all $k\ge1$,
\[
\lambda_{\max}\big(L_k^{\mathrm{up}}(G)\big)=V_{k+1}(G)\ \le\ V_k(G)=\lambda_{\max}\big(L_{k-1}^{\mathrm{up}}(G)\big).
\]
Thus on a shifted complex $\lambda_{\max}(L_k^{\mathrm{up}})$ is non-increasing in $k$, with the explicit value $V_{k+1}(G)$ in each dimension.
\end{thm}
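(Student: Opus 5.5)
The plan is to read both equalities directly off~\eqref{eq:shifted-max}, and then prove the middle inequality $V_{k+1}(G)\le V_k(G)$ by a purely combinatorial, monotonicity argument. The spectral content is entirely in the Duval--Reiner formula~\eqref{eq:shifted-max}; no use of the localization of Theorem~\ref{thm:maink} is needed.

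\emph{Step 1: the equalities.} Apply~\eqref{eq:shifted-max} at level $k$ to obtain $\lambda_{\max}(L_k^{\mathrm{up}}(G))=V_{k+1}(G)$. Apply it at level $k-1\ge0$ to obtain $\lambda_{\max}(L_{k-1}^{\mathrm{up}}(G))=V_k(G)$.

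Two small checks are needed here.
\begin{itemize}
\item The family of $j$-faces of a shifted complex is itself shifted, so the Duval--Reiner theorem applies to it in each dimension.
\item For $k=1$, the level $k-1=0$ uses the augmented convention $L_0^{\mathrm{up}}=L(G)$. There $V_1(G)$ counts the non-isolated vertices, as already noted after~\eqref{eq:shifted-max}.
\end{itemize}
If the family of $(k+1)$-faces is empty, both sides of the first equality are $0$, with the convention that $\lambda_{\max}$ of the zero operator is $0$.

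\emph{Step 2: the inequality.} Show that every vertex lying in a $(k+1)$-face also lies in a $k$-face. Let $v\in F$ with $F$ a $(k+1)$-face, so $|F|=k+2\ge3$. Choose any $u\in F\setminus\{v\}$. Then $F\setminus\{u\}$ is a $k$-face, since $G$ is closed under inclusion, and it contains $v$. Hence
\[
\{v:\deg_{k+1}(v)\ge1\}\subseteq\{v:\deg_k(v)\ge1\},
\]
which gives $V_{k+1}(G)\le V_k(G)$. Combining this with Step 1 yields the displayed chain. Iterating over $k$ gives the monotonicity in $k$ claimed in the final sentence of the theorem.

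\emph{Main obstacle.} The only delicate point is making sure~\eqref{eq:shifted-max} is legitimately invoked, and this is where I would check the details carefully. Two things need confirming.
\begin{itemize}
\item The Duval--Reiner conjugate-partition statement concerns $L_k^{\mathrm{up}}$ built from the $(k+1)$-faces with the degrees $\deg_{k+1}$.
\item The largest part of the conjugate partition equals the number of nonzero entries of $(\deg_{k+1}(v))_v$. This is immediate, because the first column of a Young diagram has length equal to the number of nonzero rows.
\end{itemize}
Everything else is elementary.
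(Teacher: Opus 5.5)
Your proposal is correct and matches the paper's proof: both equalities are read off from~\eqref{eq:shifted-max}, and $V_{k+1}(G)\le V_k(G)$ follows by deleting from a $(k+1)$-face containing $v$ some vertex other than $v$. Your extra checks (that each face family of a shifted complex is shifted, the empty case, and the $k=1$ case with $L_0^{\mathrm{up}}=L(G)$) add detail but do not change the argument.
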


\begin{proof}
The two equalities are~\eqref{eq:shifted-max}. For the inequality, suppose $v$ lies in a $(k+1)$-face $\tau$, a set of $k+2\ge2$ vertices. Deleting from $\tau$ any vertex other than $v$ leaves a $k$-face---a subface of $\tau$, hence in $G$---that still contains $v$. Hence $\{v:\deg_{k+1}(v)\ge1\}\subseteq\{v:\deg_k(v)\ge1\}$, that is $V_{k+1}(G)\le V_k(G)$.
\end{proof}

\begin{rem}\label{rem:shifted-eq}
Equality holds in dimension $k$ exactly when every vertex lying in a $k$-face also lies in a $(k+1)$-face; in particular $\lambda_{\max}(L_k^{\mathrm{up}})=\lambda_{\max}(L_{k-1}^{\mathrm{up}})$ throughout the dimensions below the top of a pure shifted complex. The bound of Theorem~\ref{thm:maink} is never binding here: since $V_k(G)\le n$, the exact value already satisfies $V_{k+1}\le V_k\le V_k+\tfrac1{k+2}(n-V_k)$. Shifted complexes are therefore a class on which Conjecture~\ref{conj:general}, open in general, holds in every dimension, with the extremal eigenvalue counted exactly by~\eqref{eq:shifted-max}. The theorem is Theorem~\ref{thm:shifted-intro} of the introduction.
\end{rem}

\section{Concluding remarks}\label{sec:open}

Proposition~\ref{prop:cycle} recasts inequality~\eqref{eq:reduce} as the spectral inequality
\begin{equation}\label{eq:refor}
\lambda_{\min}\!\big(\bar L(G)\big|_{\Zone}\big)\ \ge\ a(\Gbar),
\end{equation}
between the up Laplacian of the missing triangles of $G$, restricted to the cycle space, and the algebraic connectivity of the complement. Theorem~\ref{thm:main} proves it with $a(\Gbar)$ replaced by $\tfrac23\,a(\Gbar)$, and Theorem~\ref{thm:triangle-free} proves~\eqref{eq:refor} itself when $\Gbar$ is triangle-free. The right-hand side $a(\Gbar)=n-\mu_1(G)$ vanishes exactly when $G$ is a join, so~\eqref{eq:refor} carries content only when $\mu_1(G)<n$.

The constant on the right of~\eqref{eq:refor} cannot be improved. For $G=K_{n-1}\cup K_1$ with $n\ge4$ one has $\lambda_{\min}(\bar L|_{\Zone})=a(\Gbar)=1$, so~\eqref{eq:refor} is an equality there and Theorem~\ref{thm:main} recovers exactly two-thirds of it. Which graphs achieve equality is a question that~\eqref{eq:refor} itself does not settle. Both sides of $\lambda_{\max}(\Lup)=\mu_1$ are maxima over the components of $G$, so equality forces some component $H$ to satisfy $\lambda_{\max}(\Lup(H))=\mu_1(H)=\mu_1(G)$, and in every instance we know that component attains the ceiling of Corollary~\ref{cor:ceiling}, that is $\mu_1(H)=|V(H)|$; the graph $K_{n-1}\cup K_1$ is the case $H=K_{n-1}$. We know no connected non-join with $\lambda_{\max}(\Lup)=\mu_1$, and none exists on at most eight vertices. Ruling them out, however, cannot be separated from~\eqref{eq:refor}: a connected $G$ with $\mu_1(G)<n$ and $\lambda_{\max}(\Lup)=\mu_1$ would in particular satisfy $\lambda_{\max}(\Lup)\le\mu_1$, so excluding such graphs is no easier than the inequality itself.

Inequality~\eqref{eq:refor} is the graph case of Conjecture~\ref{conj:general}, and the three results of this paper stand to it as follows: Theorem~\ref{thm:maink} gives the conjecture up to the loss $\tfrac1{k+2}(n-\nu_{k-1})$ in every dimension, Theorem~\ref{thm:tf-k} gives it whenever every missing $(k+1)$-face has at most $k+1$ missing facets, and Theorem~\ref{thm:shifted} gives it unconditionally for shifted complexes, with the extremal eigenvalue counted by~\eqref{eq:shifted-max}. What the two unconditional cases have in common is that the loss enters at a single point, the comparison $\bar L_k(G)\succeq\frac1{k+2}Q$ of Step~1; closing the conjecture in general appears to require a weighting of the missing faces that is not uniform.

The decomposition~\eqref{eq:spec-decomp} behind Theorem~\ref{thm:lambda2} also bears on the inverse problem for $L_1$: a connected graph $L_1$-cospectral with a firefly inherits, through~\eqref{eq:spec-decomp}, the first Betti number $b_1=0$ and the triangle count $t_\triangle(G)=\tfrac13\big(\operatorname{tr}L_1-2m\big)$, so the data needed to recover the firefly are already visible in the spectrum. We expect that every firefly is determined by its Helmholtzian spectrum among connected graphs.

\end{document}